\documentclass[reqno,final,11pt]{amsart}
\usepackage{times}
\usepackage[dvips]{graphicx}
\usepackage{color}
\usepackage{amssymb,amsthm,amsmath,color,fullpage,url}
\usepackage{enumerate}
\usepackage{todonotes}
\numberwithin{equation}{section}

\newtheorem{theorem}{Theorem}[section]

\newtheorem{lemma}[theorem]{Lemma}

\newcommand{\oR}{\mathbb{R}}

\newcommand{\sgn}{\operatorname{sgn}}

\newcommand{\mat}[4]{[#1, #2; #3, #4]}

\DeclareMathOperator{\link}{link}
\DeclareMathOperator{\kr}{cr}

\DeclareMathOperator{\sign}{sign}

\DeclareMathOperator{\Conf}{Conf}

\begin{document}
\title{On $2$-cycles of graphs}

\author{Hein van der Holst}
  \address{Department of Mathematics and Statistics, Georgia State University, Atlanta, Georgia , USA}
\author{Serguei Norine}
 \address{Department of Mathematics and Statistics,  McGill University, Montreal, QC, H3A 2K6, Canada}
\author{Robin Thomas}
 \address{School of Mathematics, Georgia Institute of Technology, Atlanta, Georgia 30332-0160, USA}

\begin{abstract}
Let $G=(V,E)$ be a finite undirected graph. Orient the edges of $G$ in an arbitrary way. A $2$-cycle on $G$ is a function $d : E^2\to \mathbb{Z}$ such for each edge $e$, $d(e, \cdot)$ and $d(\cdot, e)$ are circulations on $G$, and $d(e, f) = 0$ whenever $e$ and $f$ have a common vertex. We show that each $2$-cycle is a sum of three special types of $2$-cycles: cycle-pair $2$-cycles, Kuratowski $2$-cycles, and quad $2$-cycles. In the case that the graph is Kuratowski connected, we show that each $2$-cycle is a sum of cycle-pair $2$-cycles and at most one Kuratowski $2$-cycle. Furthermore,  if the graph is Kuratowski connected, we characterize when every Kuratowski $2$-cycle is a sum of cycle-pair $2$-cycles. 
A $2$-cycle $d$ on $G$ is skew-symmetric if $d(e,f) = -d(f,e)$ for all edges $e,f\in E$.
We show that each $2$-cycle is a sum of two special types of skew-symmetric $2$-cycles: skew-symmetric cycle-pair $2$-cycles and skew-symmetric quad $2$-cycles.  In the case that the graph is Kuratowski connected, we show that each skew-symmetric $2$-cycle is a sum of skew-symmetric cycle-pair $2$-cycles. Similar results like this had previously been obtained by one of the authors for symmetric $2$-cycles. Symmetric $2$-cycles are $2$-cycles $d$ such that $d(e,f)=d(f,e)$ for all edges $e,f\in E$.
\end{abstract}

\maketitle


\section{Introduction}\label{sec:intof}

The \emph{configuration space}, $\Conf(X, k)$, of a topological space $X$ is the space
\begin{equation*}
\Conf(X, k) = \{(x_1,\ldots,x_k)\in X\times \cdots \times X : x_i\not=x_j\text{ for }i\not=j\}.
\end{equation*}
The configuration space $\Conf(X, k)$ was introduced by Fadell and Neuwirth \cite{Fadell_Neuwirth_1962}, and describes all collision-free positions of $k$ point objects in $X$, and it can  be used to describe possible collision-free trajectories of $k$ point objects in $X$. The study of the topology of $\Conf(X, k)$ is therefore of interest.
The configuration space $\Conf(X, k)$ has been studied by many authors. See Fadell and Neuwirth \cite{Fadell_Neuwirth_1962}, Arnol'd \cite{MR242196}, Vassiliev \cite{MR1168473}, and Totaro \cite{MR1404924}.

In the case that $G$ is a graph, viewed as a topological space, $\Conf(G, 2)$ describes all collision-free positions of two points in the graph $G$. As stated above, $\Conf(G,2)$ can be used to describe possible collision-free trajectories of $2$ point objects in $G$. In this paper, we study the second homology group $H_2(\Conf(G, 2))$. Results on $\Conf(G, 2)$ have been obtained by Patty \cite{MR126274, MR143211}, Copeland and Patty \cite{CopPat70}, Ghrist \cite{MR1873106}, Ghrist and Koditschek \cite{MR1882808}, Farber \cite{MR2455573}, and 
Barnett and Farber \cite{barnett2009}. Copeland and Patty~\cite{CopPat70} obtained upper and lower bounds on the Betti numbers of $\Conf(G, 2)$. 

In this paper, we study the group of $2$-cycles on $G$. This group is isomorphic to the homology group $H_2(\Conf(G, 2))$. We show that each $2$-cycle is a sum of three special types of $2$-cycles: cycle-pair $2$-cycles, Kuratowski $2$-cycles, and quad $2$-cycles. In the case that the graph is Kuratowski connected, we show that each $2$-cycle is a sum of cycle-pair $2$-cycles and at most one Kuratowski $2$-cycle. Furthermore,  if $G$ is Kuratowski connected, we characterize when every Kuratowski $2$-cycle is a sum of cycle-pair $2$-cycles. We show similar results for skew-symmetric $2$-cycles. Similar results had previously been obtained for symmetric $2$-cycles by Van der Holst \cite{Holst2003b}.

Let $G=(V,E)$ be a finite undirected graph.  Orient the edges of $G$ in an arbitrary way.
For each vertex $v$ and each oriented edge $e$ of $G$, define 
\begin{equation*}
[v,e] = \begin{cases}
+1 & \text{if $e$ is oriented towards $v$,}\\
-1 & \text{if $e$ is oriented from $v$,}\\
0 & \text{if $e$ is not incident with $v$.}
\end{cases}
\end{equation*}
A \emph{circulation} on $G$ is a function $f : E\to \mathbb{Z}$ such that 
\begin{equation*}
\sum_{e\in \delta(v)}[v,e]f(e) = 0
\end{equation*} 
for each vertex $v$ of $G$. If $C$ is an oriented cycle of $G$, then $C$ defines a circulation $\chi_{C} : E\to \mathbb{Z}$ by 
\begin{equation*}
\chi_{C}(e) = \begin{cases}
 1 & \text{if $e$ is traversed in forward direction by $C$},\\
 -1& \text{if $e$ is traversed in backward direction by $C$, and}\\ 
 0 & \text{otherwise;}
 \end{cases}
 \end{equation*} 
 we call $\chi_{C}$ a circulation on $C$.

A \emph{$2$-cycle} on $G=(V,E)$ is a function $d: E^2\to \mathbb{Z}$ such that 
\begin{enumerate}[(i)]
\item $d(e, f) = 0$ whenever $e$ and $f$ share a vertex, and 
\item for each $e\in E$, $d(e,\cdot)$ and $d(\cdot, e)$ are circulations on $G$.
\end{enumerate}
We denote the group of all $2$-cycles on $G$ by $L(G)$.

Let $H=(W, F)$ be a subgraph of $G=(V,E)$. If $d$ is a $2$-cycle on $H$, define the function $d' : E^2\to \mathbb{Z}$ by $d'(e,f) = d(e,f)$ for all $e,f\in F$ and $d'(e,f) = 0$ if $\{e,f\}\not\subseteq F$. Then $d'$ is a $2$-cycle on $G$. We can, therefore, consider any $2$-cycle on a subgraph $H$ of a graph $G$ as a $2$-cycle on $G$.

An example of a $2$-cycle on $G=(V,E)$ is the following. For oriented cycles $C$ and $D$ of $G$, let $d_{C,D} : E^2\to \mathbb{Z}$ be defined by 
\begin{equation*}
d_{C,D}(e, f) = \chi_C(e)\chi_D(f)
\end{equation*} 
for all $e,f\in E$. If $C$ and $D$ are disjoint, we call such a $2$-cycle a \emph{cycle-pair $2$-cycle}. Observe that if $C$ and $D$ are not disjoint, then $d_{C, D}$ is not a $2$-cycle as the condition that $d(e, f) = 0$ whenever $e$ and $f$ share a vertex is not satisfied. (In this paper, when we write disjoint, we mean vertex-disjoint.) We denote the subgroup of $L(G)$ generated by the set of all cycle-pair $2$-cycles by $B(G)$. 

A \emph{Kuratowski subgraph} of $G$ is a subgraph homeomorphic to $K_5$ or $K_{3,3}$. Let $V(K_5)=\{v_1,v_2,v_3,v_4,v_5\}$. Define 
\begin{equation*}
K(v_iv_j, v_k v_l):=\sgn(ijklm)
\end{equation*} 
for all permutations $(i,j,k,l,m)$ of $\{1,2,3,4,5\}$. It is easy to verify that $K$ is a $2$-cycle on $K_5$. In fact, every $2$-cycle on $K_5$ is an integral multiple of $K$.
Similarly, for $K_{3,3}$, let $V(K_{3,3})=\{a_1,a_2,a_3,b_1,b_2,b_3\}$ with the bipartition $(\{a_1,a_2,a_3\},\{b_1,b_2,b_3\}$. Define 
\begin{equation*}
K(a_{i_1}b_{j_1}, a_{i_2}a_{j_2}):=\sgn(i_1i_2i_3)\sgn(j_1j_2j_3)
\end{equation*} 
for all permutation $(i_2,i_2,i_3),(j_1,j_2,j_3)$ of  $\{1,2,3\}$. Then $K$ is a $2$-cycle on $K_{3,3}$. The $2$-cycles described in this paragraph are called \emph{elementary Kuratowski $2$-cycles}.

Let $G'=(V',E')$ be a graph obtained from a graph $G=(V,E)$ by replacing an edge $e \in E$ by a path consisting of edges $e_1,e_2,\ldots,e_k$. Orient the edges $e_1,e_2,\ldots,e_k$ in the same direction as $e$. Given a function $K : E^2\to \mathbb{Z}$, we define the function $K' : E'^2\to \mathbb{Z}$ by setting $K'(f_1, f_2)=K(f_1, f_2)$ if $f_1,f_2 \not \in \{e_1,e_2,\ldots, e_k\}$, $K'(e_i, f)=K(e, f)$, $K'(f, e_i)=K(f, e)$ and $K(e_i, e_j)=K(e, e)$ for $i,j \in \{1,2,\ldots,k\}$. We say that $K'$ is a \emph{subdivision} of $K$. 
A \emph{Kuratowski $2$-cycle} is a subdivision of an elementary Kuratowski $2$-cycle. Note that by Kuratowski's theorem a graph is non-planar if and only if there exists a Kuratowski $2$-cycle on $G$.

It was a folklore conjecture that every $2$-cycle on $G$ is a sum of an element in $B(G)$ and Kuratowski $2$-cycles on $G$. This was disproved by Barnett \cite{Barnett2010a}: there exists a $2$-cycle on $K_{3,4}$ which is not a sum of an element in $B(G)$ and Kuratowski $2$-cycles. However, we will see that the conjecture holds in case the graph is sufficiently connected. Another case where the conjecture holds is for symmetric $2$-cycles. 

If $d$ is a $2$-cycle on a graph $G=(V,E)$, we denote by $T(d)$ the $2$-cycle on $G$ defined by $T(d)(e,f) = d(f,e)$ for all edges $e,f\in E$.

A $2$-cycle $d$ on a graph $G=(V, E)$ is \emph{symmetric} if $d(e, f) = d(f, e)$ for all edges $e, f\in E.,$ that is, if $T(d) = d.$ We denote the group of all symmetric $2$-cycles on $G$ by $L^{\text{sym}}(G)$. For disjoint oriented cycles $C$ and $D$ of $G$, we call $d_{C,D} + d_{D,C}$ a \emph{symmetric cycle-pair $2$-cycle}. We denote the subgroup of $L^{\text{sym}}(G)$ generated by the set of all symmetric cycle-pair $2$-cycles by $B^{\text{sym}}(G).$ Observe that Kuratowski $2$-cycles are symmetric.
In \cite{Holst2003b}, van der Holst showed that every symmetric $2$-cycle on $G$ is a sum of an element in $B^{\text{sym}}(G)$ and Kuratowski $2$-cycles on $G$. 

A $2$-cycle $d$ on a graph $G=(V, E)$ is \emph{skew-symmetric} if $d(e, f) = -d(f, e)$ for all edges $e, f\in E,$ that is, if $T(d)=-d.$ We denote the group of all skew-symmetric $2$-cycles on $G$ by $L^{\text{skew}}(G)$. For disjoint oriented cycles $C$ and $D$ of $G$, we call $d_{C,D} - d_{D,C}$ a \emph{skew-symmetric cycle-pair $2$-cycle}. We denote the subgroup of $L^{\text{skew}}(G)$ generated by the set of all skew-symmetric cycle-pair $2$-cycles by $B^{\text{skew}}(G)$.

As cited above, Barnett proved that on $K_{3,4}$, there exists a $2$-cycle that is not a sum of cycle-pair $2$-cycles and Kuratowski $2$-cycles. The $2$-cycle introduced by Barnett is a special case of quad $2$-cycles; in Section~\ref{sec:quad}, we will give the definition. Our main results are: 

\begin{theorem}\label{thm:decomp}
Let $G$ be a graph. Then $L(G)$ is spanned by $B(G)$, the Kuratowski $2$-cycles, and the quad $2$-cycles of $G$.
\end{theorem}  

If $q$ is a quad $2$-cycle, then $q-T(q)$ is called a \emph{skew-symmetric quad} $2$-cycle.

\begin{theorem}
Let $G$ be a graph. Then $L^{\text{skew}}(G)$ is spanned by $B^{\text{skew}}(G)$ and the skew-symmetric quad $2$-cycles.
\end{theorem}

The similar theorem for symmetric matrices was proved in \cite{Holst2003b}.
\begin{theorem}
Let $G$ be a graph. Then $L^{\text{sym}}(G)$ is spanned by $B^{\text{sym}}(G)$ and the Kuratowski $2$-cycles.
\end{theorem} 

Let $G=(V,E)$ be a graph, let $H$ be a subgraph of $G$, and let $S$ be a subset of $V$ or a subgraph of $G$. By $N_H(S)$ we denote the set of vertices $u$ in $H-S$ that are adjacent to a vertex in $S$. If $H := G-S$, we write $N(S)$ for $N_H(S)$. For example, if $e=uv$ is an edge of $G$, and $H=G-\{u,v\}$, then $N(\{u,v\})$ is the set of vertices in $H$ that are adjacent to $u$ or $v$.

Let $G=(V, E)$ be a graph and let $F_1, F_2\subseteq E$. If $d : E^2\to \mathbb{Z}$, we denote by $d_{\restriction F_1\times F_2}$ the restriction of $d$ to $F_1\times F_2$.

The outline of the proof of Theorem~\ref{thm:decomp} is as follows. Let $d$ be a $2$-cycle on a graph $G=(V,E)$. In Section~\ref{sec:conn}, we show that we may assume that $G$ is $3$-connected. By a theorem of Thomassen (see \cite{Diestel}) there exists an edge $e=uv$ such that $G/e$ is $3$-connected unless $G$ has four vertices, in which case the theorem is clear. We will show that we can subtract cycle-pair $2$-cycles, quad $2$-cycles, and Kuratowski $2$-cycles from $d$, resulting in a $2$-cycle $d'$ so that $d'(f, g) = 0$ and $d'(g, f) = 0$ for every $f\in \delta(u)$ and every $g\in \delta(v)$. To show this, we show that we can subtract cycle-pair $2$-cycles and a quad $2$-cycle from $d$ yielding a $2$-cycle $d'$ so that $d'(f, g) = 0$ and $d'(g, f) = 0$ for every $f\in \delta(u)$ and every $g\in \delta(v)$, unless the vertices in $N(\{u,v\})$ are on a cycle $C$. We then show that if there are disjoint paths in $H:=G-\{u,v\}$ with ends $r_1,r_2\in N(\{u,v\})$ and $s_1,s_2\in N(\{u,v\})$, respectively, such that $r_1,s_1,r_2,s_2$ occur in $C$ in order, then  we can subtract cycle-pair $2$-cycles and a quad $2$-cycle from $d$ yielding a $2$-cycle $d'$ so that $d'(f, g) = 0$ and $d'(g, f) = 0$ for every $f\in \delta(u)$ and every $g\in \delta(v)$. We may therefore assume that there are no paths in $H$ with ends $r_1,r_2\in N(\{u,v\})$ and $s_1,s_2\in N(\{u,v\})$, respectively, such  that $r_1,s_1,r_2,s_2$ occur in $C$ in order. If $H$ has no tripod with feet in $N(\{u,v\})$, then, according to Robertson and Seymour \cite{RobertsonST93}, $H$ can be embedded in a disc $D$ with the vertices of $N(\{u,v\})$ on the boundary of $D$, and so the crossing number of $d_{\restriction E(H)^2}$ equals $0$. Next, we show that if there is a tripod with feet in $N(\{u,v\})$ and $H$ is mapped in a disc $D$ with the vertices of $N(\{u,v\})$ on the boundary of $D$, then we can subtract a multiple of a quad $2$-cycle from $d$ such that for the resulting $2$-cycle $d''$ the crossing number of $d''_{\restriction E(H)^2}$ equals $0$. 
In both cases,  the crossing number of $d_{\restriction E(H)^2}$ equals $0$ .  We then show that, if $H_1 = G[\delta(u)\cup \delta(v)\cup E(C)]$ and $u$ and $v$ are mapped outside the disc $D$, then the crossing number of $d_{\restriction E(H_1)^2}$ equals $0$. Using the cycle $C$, we show that we can subtract cycle-pair $2$-cycles from $d$ such that the resulting $2$-cycle $d''$ satisfies $d''_{\restriction \delta(u)\times \delta(v)} = \alpha K_{\restriction \delta(u)\times \delta(v)}$ and $d''_{\restriction \delta(v)\times \delta(u)} = \beta K_{\restriction \delta(v)\times \delta(u)}$ for some Kuratowski $2$-cycle $K$ in $H_1$ and integers $\alpha, \beta$. Using that the crossing number of $d_{\restriction E(H_1)^2}$ equals $0$, we show that $\alpha=\beta$. We can therefore subtract $\alpha K$ from $d''$ resulting in a $2$-cycle $d'$ so that $d'(f, g) = 0$ and $d'(g, f) = 0$ for every $f\in \delta(u)$ and every $g\in \delta(v)$.

%
%
An \emph{arc} of a subgraph $H$ of $G$ is a path of $H$ with distinct ends, both of degree $\geq 3$ in $H$, and with internal vertices of degree two in $H$.

Let $H_1$ and $H_2$ be Kuratowski subgraphs of $G$. A subset of edges $F$ \emph{meets} an arc if an edge of the arc belongs to $F$.
A $(\leq 3)$-separation $(G_1,G_2)$ of $G$ \emph{divides} Kuratowski subgraphs $H_1$ and $H_2$ if $E(G_1)$ meets $\leq 3$ arcs of $H_1$ and $E(G_2)$ meets $\leq 3$ arcs of $H_2$, or vice versa. A graph $G$ is Kuratowski connected if no $(\leq 3)$-separation $(G_1,G_2)$ divides Kuratowski subgraphs $H_1$ and $H_2$. In Section~\ref{sec:Kurconn},
we show the following theorems.

\begin{theorem}
Let $G$ be a Kuratowski-connected graph. Then $L(G)$ is spanned by $B(G)$ and the Kuratowski $2$-cycles.
\end{theorem}

\begin{theorem}
Let $G$ be a Kuratowski-connected graph. Then $L^{\text{skew}}(G) = B^{\text{skew}}(G)$.
\end{theorem}

In \cite{Holst2003b}, van der Holst proved the following theorem.

\begin{theorem}
Let $G$ be a Kuratowski-connected graph. If $d_H$ and $d_{H'}$ are Kuratowski $2$-cycles on Kuratowski subgraphs $H$ and $H'$ of $G$, respectively, then $d_H - d_{H'} \in B^{\text{sym}}(G)$ or $d_H + d_{H'} \in B^{\text{sym}}(G)$.
\end{theorem}

From Theorem~\ref{thm:mainKur}, we immediately obtain the following theorem.

\begin{theorem}
Let $G$ be a Kuratowski-connected graph. Then $L(G)$ is spanned by $B(G)$ and at most one Kuratowski $2$-cycle.
\end{theorem}

In Section~\ref{sec:Kurconn}, we also prove  the following theorem.
\begin{theorem}
Let $G$ be a Kuratowski-connected graph. Then $L(G)=B(G)$ if and only if $G$ is planar or $G$ does not admit a linkless embedding.
\end{theorem}

\section{Quad $2$-cycles}\label{sec:quad}

Barnett \cite{Barnett2010a} proved that there exists a $2$-cycle on $K_{3,4}$ which is not a sum of an element in $B(G)$ and Kuratowski $2$-cycles. The graph $K_{3,4}$ is a special case of a quad. On each quad there exist $2$-cycles that are not a sum of an element in $B(G)$ and Kuratowski $2$-cycles.

A \emph{quad} of a graph $G=(V,E)$ is a subgraph $K = P_1\cup P_2\cup P_3\cup Q_1\cup Q_2\cup Q_3\cup R_1\cup R_2\cup R_3$ of $G$ consisting of 
\begin{enumerate}
\item four distinct vertices $a,b,c,d$; 
\item three paths $P_1,P_2,P_3$ of $G$ between $a$ and $b$, mutually internally disjoint, each with at least one internal vertex;
\item three paths $R_1,R_2,R_3$ of $G$ between $c$ and $d$, mutually internally disjoint, each with at least one internal vertex;
\item three paths $Q_1,Q_2,Q_3$ of $G$, mutually disjoint, such that for $i=1,2,3$, $Q_i$ has ends $u_i$ and $v_i$, where $u_i\in V(P_i)\setminus \{a,b\}$, $v_i\in V(R_i)\setminus \{c,d\}$, and no vertex of $Q_i$, except for $u_i$ and $v_i$ belongs to $V(P_1\cup P_2\cup P_3\cup R_1\cup R_2\cup R_3)$;
\item for $i=1,2,3$, $V(P_i\cap (R_1\cup R_2\cup R_3)) \subseteq V(Q_i)$. 
\end{enumerate}
We allow each path $Q_i$ to consist of one vertex.
The \emph{width} of a quad is the sum of the lengths of the paths $Q_1,Q_2,Q_3$. We call the sets $\{a,b\}$ and $\{c,d\}$ the \emph{axles} of the quad.
Choose a vertex $s \in \{a,b\}$ and a vertex $t\in \{c,d\}$. For $i=1,2,3$, the ends of $Q_i$ split the paths $P_i$ and $R_i$ into two subpaths. We denote the subpath of $P_i$ containing the vertex $s$ by $P_{L,i}$ and the other by $P_{R,i}$, and, similarly,  we denote the subpath of $R_i$ containing the vertex $t$ by $R_{L,i}$ and the other by $R_{R,i}$.
Let 
\begin{equation*}
K_L = P_{L,1}\cup  P_{L,2} \cup P_{L,3}\cup Q_1\cup Q_2\cup Q_3\cup R_{L,1}\cup R_{L,2}\cup R_{L,3}
\end{equation*}
and let 
\begin{equation*}
K_R = P_{R,1}\cup  P_{R,2} \cup P_{R,3}\cup Q_1\cup Q_2\cup Q_3\cup R_{R,1}\cup R_{R,2}\cup R_{R,3}.
\end{equation*}
For $i=2,3$, let $C_i$ be the unique cycle of $K_L$ that does not contain $Q_i$ and let $D_i$ be the unique cycle of $K_R$ that does not contain $Q_i$. 
Orient the paths $P_{L,1}$ and $P_{R,1}$ from $a$ and $b$, respectively. For $i=2,3$, orient $C_i$ and $D_i$ such that $P_{L,1}$ and $P_{R,1}$ are traversed in forward direction by $C_i$ and $D_i$, respectively. 
Define  $q_{s,t} : E^2\to\mathbb{Z}$ by
\begin{equation*}
q_{s,t} = d_{C_2, D_3}  - d_{C_3, D_2}.
\end{equation*}
It is easy to verify that $q_{s,t} \in L(K)$.
We call any such $2$-cycle a \emph{quad $2$-cycle}. We call $\{s,t\}$ the \emph{left side} of the quad $2$-cycle.


The next lemma explains why in the statement of the decomposition of symmetric $2$-cycles no quad $2$-cycles appear.

\begin{lemma}\label{lem:nosymquad}
Let $q$ be a quad $2$-cycle. Then $q+T(q)$ is a sum of Kuratowski $2$-cycles.
\end{lemma}
\begin{proof}
Let $K$ be a quad supporting $q$, and let $P_1,P_2,P_3, Q_1,Q_2,Q_3,R_1,R_2,R_3$ as in the definition of a quad. Let $H$ be the Kuratowski subgraph 
\begin{equation*}
P_1\cup P_2\cup P_3\cup Q_1\cup Q_2\cup Q_3\cup R_{R,1}\cup R_{R,2}\cup R_{R,3},
\end{equation*} 
and let $d_H$ be the Kuratowski $2$-cycle with $d_H(e, f) = d_H(f, e) = 1$ if $e\in E(P_{L,1})$ and $f\in E(P_{R,3})$. Let $z=q + T(q) -d_H$. Then $z(e, \cdot) = 0$ for all edges $e$ of $\cup_{i=1}^3 P_{L,i}$. Hence we may view $z$ as a $2$-cycle on the Kuratowski subgraph 
\begin{equation*}
H' = P_{R,1}\cup P_{R,2}\cup P_{R,3}\cup Q_1\cup Q_2\cup Q_3\cup R_1\cup R_2\cup R_3. 
\end{equation*}
Any $2$-cycle on a Kuratowski subgraph is an integral multiple of a Kuratowski $2$-cycle. Hence $q+T(q) = d_{H}+d_{H'}$ for some Kuratowski $2$-cycle $d_{H'}$ on $H'$.
\end{proof}

\section{Increasing the connectivity}\label{sec:conn}

In this section, we show that it suffice to prove Theorem~\ref{thm:decomp} for $3$-connected graphs.

A \emph{separation} of a graph $G$ is a pair $(G_1,G_2)$ of subgraphs with $G_1\cup G_2 =G$ and $E(G_1)\cap E(G_2)=\emptyset$. The \emph{width} of a separation $(G_1, G_2)$ is $k := V(G_1 \cap G_2)$. We call a separation of width $k$ a \emph{$k$-separation}.
If $(G_1,G_2)$ is a separation of a graph $G$, we denote by $B(G_1, G_2)$ we denote the group generated by all $2$-cycles $d_{C, D}$ and $d_{D, C}$, with $C$ a cycle of $G_1$, $D$ a cycle of $G_2$, and $C$ and $D$ disjoint.

The support of a circulation $c$ on a graph $G$ is the set of all edges $e\in E(G)$ such that $c(e)\not=0$. 

The proofs of the following lemmas follow the proofs of similar lemmas in \cite{Holst2003b}.

\begin{lemma} \label{lem:1sep}
Let $(G_1, G_2)$ be a $(\leq 1)$-separation of $G = (V,E)$. Then
$L(G) = L(G_1) + L(G_2) + B(G_1,G_2)$.
\end{lemma}

\begin{proof}
The inclusion $L(G_1) + L(G_2) + B(G_1,G_2)\subseteq L(G)$ is
clear. To see the other inclusion, let $d \in L(G)$.
Define $d_1 = d_{\restriction E(G_1)^2}$ and $d_2 = d_{\restriction E(G_2)^2}$.
Let $d' = d-d_1-d_2$. So $d'_{\restriction E(G_1)^2} = 0$ and $d'_{\restriction E(G_2)^2} = 0$.

Order the edges of $G_1$ arbitrarily as $e_1,e_2,\ldots,e_k$ that starts with
edges in $\delta_{G_1}(u)$ for each $u\in V(G_1)\cap V(G_2)$, and the edges
of $G_2$ arbitrarily as $f_1,f_2,\ldots, f_\ell.$ Choose $i,j$ with $d'(e_i,f_j) \not= 0$ or $d'(f_j, e_i)\not=0$, and $i+j$ minimal. We assume that $d'(e_i, f_j)\not=0$, the other case is similar. Let $C$ be a cycle of $G_1$ in the support of $d'(\cdot, f_j)$ that contains $e_i$. Let $D$ be a cycle of $G_2$ in the support of $d'(e_i,\cdot)$ that contains $f_j$.
The cycles $C$ and $D$ are disjoint. This is clear
if $(G_1,G_2)$ is a $0$-separation. If $(G_1,G_2)$ is a $1$-separation and $C$ and $D$ are not disjoint,
then $C$ contains an edge $e$ incident to $u$, and hence by the ordering
chosen, $e_i$ is incident to $u$. Then $D$ does not traverse $u$, as it is in the
support of $d'(e_i,\cdot)$; a contradiction.
Now orient $C$ and $D$ in such a way that $d_{C,D}(e_i,f_j) = 1$. Replacing $d'$ by $d'-d'(e_i,f_j)d_{C,D}$ gives a reduction. Repeating this until we reach $d' = 0$, shows the
lemma.
\end{proof}

\begin{lemma} \label{lem:2sep}
Let $(G_1, G_2)$ be a $2$-separation of a $2$-connected graph $G$.
For $i = 1, 2$, let $P_i$ be a path in $G_i$ connecting both vertices in
$V(G_1)\cap V (G_2)$. 
Then $L(G) = L(G_1\cup P_2) + L(G_2\cup P_1) + B(G_1,G_2)$.
\end{lemma}

\begin{proof}
The inclusion $L(G_1\cup P_2) + L(G_2\cup P_1) + B(G_1,G_2)\subseteq L(G)$ is clear.

We now prove the converse inclusion. Let $u_1$ and $u_2$ be the vertices in $V(G_1)\cap V(G_2)$. By reorienting the edges of $P_1$ and $P_2$, we may assume that $P_1$
and $P_2$ are oriented paths. For each edge $e$ in $G_1$, let
$\phi_1(e)$ be the net inflow in $u_1$ of $d(e,\cdot)$ when restricted
to $E(G_1)$, and let $\phi_2(e)$ be the net inflow in $u_1$ of $d(\cdot, e)$ when restricted to $E(G_1)$. For edges $e,f$ in $G_1\cup P_2$ define
\begin{equation}
d_1(e,f) := \begin{cases}
d(e,f) & \text{if $e,f \in E(G_1)$,}\\
0 & \text{if $e,f \in E(P_2)$,}\\
\phi_1(e) & \text{if $e \in E(G_1)$ and $f \in E(P_2)$,}\\
\phi_2(f) & \text{if $e \in E(P_2)$ and $f \in E(G_1)$.}
\end{cases}
\end{equation}
Then $d_1 \in L(G_1\cup P_2)$. Similarly, we define
$d_2 \in L(G_2\cup P_1)$.

Let $d_3 = d - d_1 - d_2$. So $d_3(e,f) = 0$ if $e,f \in E(G_1)$ or
$e,f \in E(G_2)$. Order the edges of $G_1$ and $G_2$ as
$e_1,e_2,\ldots, e_k$ and $f_1,f_2,\ldots, f_\ell$ respectively, in such a way
that the edges in $\delta_{G_1}(u_1)$ occur first among
$e_1,e_2,\ldots, e_k$, and the edges in $\delta_{G_2}(u_2)$ occur first
among $f_1,f_2,\ldots, f_\ell$. Choose $i,j$ with $d_3(e_i,f_j) \not= 0$ or $d_3(f_j, e_i)\not=0$, and
$i+j$ minimal. We assume that $d_3(e_i, f_j)\not=0$; the case where $d_3(f_j, e_i)\not=0$ is similar. Let $C$ be a cycle in the support of
$d_3(\cdot,f_j)$ and containing $e_i$. Let $D$ be a cycle
contained in the support of $d(e_i,\cdot)$ and containing $f_j$.

Then $C$ and $D$ are cycles in $G_1$ and $G_2$, respectively,
as $d_3(e,f) = 0$ if $e,f \in E(G_1)$ or $e,f \in E(G_2)$.
Moreover, $C$ and $D$ are disjoint.  For suppose they have
a vertex in common, say $u_1$. So $d_3(e,f_j)\not= 0$ for some
$e\in \delta_{G_1}(u_1)$. Then $e_i \in \delta_{G_1}(u_1)$, by the choice
of the ordering of the edges $e_1,e_2,\ldots, e_k.$  But since the support
of $d_3(e_i, \cdot)$ contains no edges incident with $u_1$, we
arrive at a contradiction.

Choose the orientations of $C$ and $D$ such that $e_i$ and $f_j$ occur
in forward direction. Then replacing $d_3$ by
$d_3 - d_3(e_i,f_j) d_{C,D}$ gives a reduction. Repeating this
shows that $d_3 \in B(G_1,G_2)$.
\end{proof}

If $(G_1,G_2)$ is a separation of a graph $G$, we denote by $B^{\text{skew}}(G_1, G_2)$ we denote the group generated by all $2$-cycles $d_{C, D} - d_{D, C}$, with $C$ a cycle of $G_1$, $D$ a cycle of $G_2$, and $C$ and $D$ disjoint.

The proofs of the following two lemmas are similar to the proofs of Lemmas~\ref{lem:1sep} and~\ref{lem:2sep}.

\begin{lemma} \label{lem:1sepskew}
Let $(G_1, G_2)$ be a $(\leq 1)$-separation of $G = (V,E)$. Then
$L^{\text{skew}}(G) = L^{\text{skew}}(G_1) + L^{\text{skew}}(G_2) + B^{\text{skew}}(G_1,G_2)$.
\end{lemma}

\begin{lemma} \label{lem:2sepskew}
Let $(G_1, G_2)$ be a $2$-separation of a $2$-connected graph $G$.
For $i = 1, 2$, let $P_i$ be a path in $G_i$ connecting both vertices in
$V(G_1)\cap V (G_2)$. 
Then $L^{\text{skew}}(G) = L^{\text{skew}}(G_1\cup P_2) + L^{\text{skew}}(G_2\cup P_1) + B^{\text{skew}}(G_1,G_2)$.
\end{lemma}

\section{Contracting an edge}

Let $e = uv$ be an edge of a graph $G = (V,E)$ and
let $d\in L(G)$. If $d(f, g) = 0$ and $d(g, f)=0$ for all $f\in \delta(u)$ and all $g\in \delta(v)$, we define $d/e = d_{\restriction (E\setminus \{e\})^2}$. 
It is easy to see that $d/e\in L(G/e)$. If we show that there exist cycle-pair $2$-cycles $d_{C_i, D_i}$, $i=1,\ldots,k$, quad $2$-cycles $q_i$, $i=1,\ldots,\ell$, and Kuratowski $2$-cycles $d_{H_i}$, $i=1,\ldots, m$ such that $d' = d - \sum_{i=1}^k d_{C_i, D_i} - \sum_{i=1}^{\ell} q_i - \sum_{i=1}^m d_{H_i}$ satisfies $d'(f,g) = 0$ and $d'(g, f)$ for all $f\in \delta(u)$ and all $g\in \delta(v)$, then $d'/e$ is a $2$-cycle on $G/e$. By induction, $L(G/e)$ is spanned by $B(G/e)$, the Kuratowski $2$-cycles, and the quad $2$-cycles of $G/e$. In this section, we study the $2$-cycles $d$ on $G$ if $d/e$ is a cycle-pair $2$-cycle, a Kuratowski $2$-cycle, or a quad $2$-cycle.

\begin{lemma} \label{lem:uncontract}
Let $G=(V,E)$ be a graph and $e$ be an edge of $G$. Then, for any $d'\in L(G/e)$, there exists a unique $d \in L(G)$ such that $d/e = d'$. 
\end{lemma}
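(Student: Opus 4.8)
The plan is to pin $d$ down by its values on all pairs of edges, show these values are forced (which gives uniqueness), and then check that the forced values really do assemble into a $2$-cycle lying in $L^{\sigma}(G)$ (which gives existence). Write $e=uv$ and let $w$ be the vertex of $G/e$ obtained by identifying $u$ and $v$, so that $E(G/e)=E-\{e\}$; orient $e$ so that $[u,e]=-[v,e]=-1$.

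For uniqueness, suppose $d\in L^{\sigma}(G)$ satisfies $d/e=d'$. Then $d(f,g)=d'(f,g)$ is forced for all $f,g\in E-\{e\}$, and the only remaining values involve $e$. Condition (i) gives $d(e,e)=0$, and $d(e,g)=d(f,e)=0$ whenever the non-$e$ edge meets $u$ or $v$. For the rest, the circulation condition $d(f,\delta_G(u))=0$ reads $[u,e]d(f,e)+\sum_{h\ni u,\,h\neq e}[u,h]d(f,h)=0$; since $[u,e]=\pm1$ and the remaining $d(f,h)=d'(f,h)$ are already known, this determines $d(f,e)$, and symmetrically $d(\delta_G(u),g)=0$ determines $d(e,g)$. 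Thus $d$ is unique. (Equivalently, the difference of two solutions is a $2$-cycle vanishing on $\mathbb{Z}\langle E-\{e\}\rangle\times\mathbb{Z}\langle E-\{e\}\rangle$, and the same identities force it to vanish on every pair involving $e$.)

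For existence I would realize $d$ through one auxiliary homomorphism encoding the fact that, modulo $\delta_G(u)$, the edge $e$ is a combination of the other edges at $u$. Define $\rho:\mathbb{Z}\langle E\rangle\to\mathbb{Z}\langle E-\{e\}\rangle$ by $\rho(h)=h$ for $h\neq e$ and $\rho(e)=-[u,e]\sum_{h\ni u,\,h\neq e}[u,h]\,h$, and set $d(f,g):=d'(\rho(f),\rho(g))$. The heart of the verification is the boundary computation
\[
\rho(\delta_G(u))=0,\qquad \rho(\delta_G(v))=\delta_{G/e}(w),\qquad \rho(\delta_G(x))=\delta_{G/e}(x)\ \ (x\neq u,v),
\]
each obtained by expanding and using $[u,e]^2=1$ and $[v,e]=-[u,e]$ (edges parallel to $e$, which become loops at $w$, drop out since $[u,h]+[v,h]=0$). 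Granting these, condition (ii) for $d$ is immediate: $d(f,\delta_G(x))=d'(\rho(f),\rho(\delta_G(x)))$ equals $d'(\rho(f),0)$, $d'(\rho(f),\delta_{G/e}(w))$, or $d'(\rho(f),\delta_{G/e}(x))$, all zero because $d'$ is a $2$-cycle on $G/e$, and the first-slot identities are identical. Condition (i) also follows: if $f,g\in E-\{e\}$ share a vertex $x$ of $G$ then they share $x$ (or $w$, when $x\in\{u,v\}$) in $G/e$, so $d'(f,g)=0$; and $d(f,e)$ with $f\ni u$ or $f\ni v$ vanishes because then $f$ and every edge in the support of $\rho(e)$ meet $w$.

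Finally I would record that $d\in L^{\sigma}(G)$ and that $d/e=d'$. The first is the one-line observation, exactly as in the proof of Lemma~\ref{lem:sigma}, that $\overline\sigma(d)(f,g)=\overline\sigma(d')(\rho(f),\rho(g))=0$ since $\overline\sigma(d')=0$. For $d/e=d'$: when $f\ni u$ and $g\ni v$ we have $d(f,g)=d'(f,g)=0$ because $f,g$ both meet $w$, so $d/e$ is defined, and its restriction to $\mathbb{Z}\langle E-\{e\}\rangle\times\mathbb{Z}\langle E-\{e\}\rangle$ is visibly $d'$. I expect the identity $\rho(\delta_G(v))=\delta_{G/e}(w)$ to be the only genuinely load-bearing step: it is precisely where the circulation condition on $d'$ at the contracted vertex $w$ (as opposed to an arbitrary bilinear form) is used, and it is what reconciles the two a priori different formulas for the $e$-values arising from the endpoints $u$ and $v$.
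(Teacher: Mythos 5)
Your proof is correct and takes essentially the same route as the paper: the paper defines the same homomorphism (your $\rho$ is its $\phi$; the two formulas for the image of $e$ agree since $[u,h]=0$ for $h$ not incident with $u$), sets $d(f,g)=d'(\phi(f),\phi(g))$, and verifies the same boundary identities and adjacency cases. Your explicit uniqueness argument via the circulation condition at $u$, and your check that $d(f,g)=0$ for $f$ at $u$ and $g$ at $v$ so that $d/e$ is actually defined, merely spell out steps the paper declares clear.
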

\begin{proof}
Let $d'\in L(G/e)$. Let $u$ and $v$ be the ends of $e$. Define $d : E^2\to \mathbb{Z}$ by
\begin{equation*}
d(f, g) = \begin{cases}
d'(f, g) & \text{if $f\not=e$, $g\not=e$,}\\
-[u, e] \sum_{h\in E\setminus \{e\}}[u, h]d'(h, g) &  \text{if $f=e$, $g\not=e$,}\\
-[u, e] \sum_{h\in E\setminus \{e\}}[u, h]d'(f, h) &  \text{if $f\not=e$, $g=e$,}\\
0 & \text{if $f=e$, $g=e$.}
\end{cases}
\end{equation*}
Then $d(f,\cdot)$ and $d(\cdot, f)$ are circulations for each edge $f$ of $G$.

It remains to show that $d(f, g) = 0$ if $g$ and $f$ are edges that have a common vertex. 
If $f,g \in E\setminus \{e\}$ and $f$ and $g$ are adjacent, then $d(f, g) = d'(f, g) = 0$.
If $f\in E\setminus \{e\}$ and $f$ and $e$ are adjacent, then 
\begin{equation*}
d(f, e)  = -[u, e]\sum_{h\in E \setminus \{e\}}[u, h] d'(f, h) = 0.
\end{equation*}
In the same way, $d(e, f) = 0$ for every edge $f$ that share a vertex with $e$. Furthermore,
$d(e,e) = 0$.
Hence $d(f, g) = 0$ if $g$ and $f$ have a common vertex. Hence $d\in L(G)$.  It is clear that $d$ is unique.
\end{proof}

The following lemma is easy to verify.
\begin{lemma}\label{lem:contrKur}
Let $G=(V,E)$ be a graph, let $e$ be an edge of $G$, and let $d \in L(G)$. Then
\begin{enumerate}[(i)]
\item\label{item:disjcircuits} if $d/e = d_{C', D'}$ for disjoint oriented cycles $C'$ and $D'$ of $G/e$, then $d = d_{C, D}$ for disjoint oriented cycles $C$ and $D$ of $G$;
\item if $d/e$ is a Kuratowski $2$-cycle on some $K_{3,3}$-subdivision $H'$ in $G/e$, then $d$ is a Kuratowski $2$-cycle on some $K_{3,3}$-subdivision
$H$ in $G$, with $H/e = H'$; 
\item if $d/e$ is a Kuratowski $2$-cycle on  $H'$ for some $K_5$-subdivision $H'$ in $G/e$, then $d = d_{H} + \alpha (d_{C,D} + d_{D,C}))$ for some $\alpha \in \{0,1\}$, some
disjoint oriented cycles $C$ and $D$ of $G$,  and a Kuratowski $2$-cycle $d_{H}$ on
some $K_5$- or $K_{3,3}$-subdivision  $H$ in $G$, contained in a subgraph $H''$ of
$G$ with $H''/e = H'$;
\end{enumerate}
\end{lemma}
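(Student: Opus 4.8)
I would base all three parts on the uniqueness asserted in Lemma~\ref{lem:uncontract}: for each $d'\in L^{\sigma}(G/e)$ there is exactly one $d\in L^{\sigma}(G)$ with $d/e=d'$. Thus in every case it suffices to produce, inside $G$, a $2$-cycle of the claimed shape that (a) vanishes on every pair $(f,g)$ with $f$ incident to $u$ and $g$ incident to $v$, so that applying $/e$ to it is legitimate, and (b) restricts under $/e$ to $d/e$; uniqueness then identifies this $2$-cycle with $d$. Writing $w$ for the vertex of $G/e$ produced by contracting $e=uv$, I note that lifting a subgraph of $G/e$ to $G$ comes down to deciding, for each edge of $G/e$ at $w$, whether it attaches to $u$ or to $v$ in $G$, and reinstating $e$ whenever the chosen attachments use both ends.

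For (i), the cycles $C'$ and $D'$ are vertex-disjoint, so at most one of them, say $C'$, passes through $w$. I would lift $D'$ to the identical cycle $D$, which avoids both $u$ and $v$, and lift $C'$ to a cycle $C$ by routing it through $u$, through $v$, or through both along $e$, according to which ends carry the two $C'$-edges at $w$. Because $D$ misses both ends of $e$, the form $\sigma(\chi_{C,D})$ annihilates all pairs $(f,g)$ with $f$ at $u$ and $g$ at $v$; and for $f,g\neq e$ one has $\chi_{C}(f)=\chi_{C'}(f)$ and $\chi_{D}(g)=\chi_{D'}(g)$, so $\sigma(\chi_{C,D})/e=\sigma(\chi_{C',D'})=d/e$. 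Uniqueness yields $d=\sigma(\chi_{C,D})$. For (ii) I would distinguish how $w$ meets the $K_{3,3}$-subdivision $H'$---absent from $H'$, an internal degree-$2$ vertex, or a degree-$3$ branch vertex---and in each case split the $H'$-edges at $w$ between $u$ and $v$, reinserting $e$ where the split is nontrivial, producing $H$ with $H/e=H'$. The one thing to confirm is that $H$ remains a $K_{3,3}$-subdivision: two branch vertices joined by $e$ would contract to a single degree-$4$ vertex, impossible in $K_{3,3}$, so at most one of $u,v$ is a branch vertex and $e$, if used, lies interior to a subdivided edge. Then $d_{H}$ is liftable and $d_{H}/e=d_{H'}=d/e$, whence $d=d_{H}$.

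Part (iii) proceeds identically but harbours the genuinely new case, since branch vertices of a $K_{5}$-subdivision have degree $4$. When $w$ is a branch vertex its four incident paths split between $u$ and $v$ as $4$-$0$, $3$-$1$, or $2$-$2$. For the $4$-$0$ and $3$-$1$ splits (and whenever $w$ is internal to or absent from $H'$) reinserting $e$ gives a genuine $K_{5}$-subdivision $H$ with $H/e=H'$, and as in (ii) we obtain $d=d_{H}$, that is $\alpha=0$. The $2$-$2$ split is the exception: reinserting $e$ produces a subgraph $H''$ in which both $u$ and $v$ have degree $3$, so $H''$, although $H''/e=H'$, is not a $K_{5}$-subdivision. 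Labelling the four paths at $w$ so that $P_{1},P_{2}$ leave $u$ and $P_{3},P_{4}$ leave $v$, and writing $x_{1},x_{2},x_{3},x_{4}$ for the far branch vertices they reach, the underlying $K_{5}$ of $H'$ has branch vertices $w,x_{1},x_{2},x_{3},x_{4}$, and $H''$ is seen to contain the $K_{3,3}$-subdivision $H$ with sides $\{u,x_{3},x_{4}\}$ and $\{v,x_{1},x_{2}\}$. The two connections $x_{1}x_{2}$ and $x_{3}x_{4}$ of the underlying $K_{4}$, which lie in $H''$ but outside $H$, then close up two vertex-disjoint cycles: $C$ runs from $u$ along $P_{1}$ to $x_{1}$, across the $x_{1}x_{2}$-connection to $x_{2}$, and back along $P_{2}$, while $D$ is defined symmetrically through $v$, $x_{3}$, $x_{4}$. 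Here I would set $\alpha=1$ and claim $d=d_{H}+\chi_{C,D}+T(\chi_{C,D})$.

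Verifying this last identity is the step I expect to be the principal obstacle. By the subdivision conventions it reduces to the single combinatorial core---$K_{5}$ with one vertex split into two adjacent degree-$3$ vertices $u,v$---where I would compute the lift $d$ through the explicit homomorphism $\phi$ of Lemma~\ref{lem:uncontract} and match it against $d_{H}+\chi_{C,D}+T(\chi_{C,D})$ using the sign formulas defining the elementary Kuratowski $2$-cycles on $K_{5}$ and on $K_{3,3}$. The decisive observation is that $d_{H}$ alone does not vanish on the relevant pairs: on two disjoint edges, one at $u$ and one at $v$, it takes the value $\pm1$, whereas the lift $d$ must vanish there. The circuit-pair term $\chi_{C,D}+T(\chi_{C,D})$ is exactly what cancels $d_{H}$ on these pairs and on the pair $x_{1}x_{2},x_{3}x_{4}$ of extra edges, so that the sum becomes liftable. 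Once a consistent orientation of $C$ and $D$ is fixed and one checks that $d_{H}+\chi_{C,D}+T(\chi_{C,D})$ both annihilates every pair of a $u$-edge with a $v$-edge and restricts under $/e$ to the $K_{5}$ Kuratowski $2$-cycle, Lemma~\ref{lem:uncontract} forces it to equal $d$, completing the proof.
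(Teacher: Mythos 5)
The paper gives no argument for this lemma at all---it is introduced with ``The following lemma is easy to verify''---so there is nothing to compare your proof against line by line; I can only assess it on its own terms, and it is sound. Your organizing idea, namely to exploit the uniqueness in Lemma~\ref{lem:uncontract} so that each part reduces to exhibiting a $2$-cycle of the claimed shape that vanishes on all pairs of a $u$-edge with a $v$-edge and contracts to $d/e$, is exactly the right mechanism and is almost certainly what the authors had in mind. Your case analysis by the degree of the contracted vertex $w$ in $H'$ is complete (in particular you correctly isolate the $2$--$2$ split of a degree-$4$ branch vertex of a $K_5$-subdivision as the only case producing $\alpha=1$, and the $K_{3,3}$-subdivision with parts $\{u,x_3,x_4\}$, $\{v,x_1,x_2\}$ together with the disjoint triangles through $x_1x_2$ and $x_3x_4$ is the correct witness). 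The only part you leave as a promissory note is the sign verification that $d_H+\chi_{C,D}+T(\chi_{C,D})$ kills every pair $(ux_i\text{-edge},\,vx_j\text{-edge})$ and restricts to the $K_5$ form; this does check out with the paper's conventions (the four values $d_H(ux_i,vx_j)$ form the pattern $\mp,\pm,\pm,\mp$, which is exactly $-\chi_C(ux_i)\chi_D(vx_j)$ for a suitable orientation, and any residual global sign can be absorbed into the choice of labelling of the $K_{3,3}$ bipartition), so no gap remains.
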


For quad $2$-cycles on a graph, we have the following lemma.
\begin{lemma}\label{lem:contrquad}
Let $G=(V,E)$ be a graph, let $e$ be an edge of $G$, and let $q\in L(G)$. Then, if $q/e$ is a quad $2$-cycle on some quad $K'$ in $G/e$, then either $q$ is a quad $2$-cycle on a quad $K$ in $G$ or $q\in B(G)$.
\end{lemma}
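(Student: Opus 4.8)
The plan is to analyze how a quad in $G/e$ lifts back to $G$, tracking what happens to the edge $e=uv$ when we decontract it. Since $q'=q/e$ is a quad $2$-cycle on a quad $H$ in $G/e$, let $w$ denote the vertex of $G/e$ obtained by contracting $e$, so $w$ corresponds to the identified endpoints $u$ and $v$. The key case distinction is whether or not $w$ is a vertex of $H$, and if so, what role $w$ plays in the quad structure ($H = P_1\cup\cdots\cup R_3$). If $w\notin V(H)$, or if $w$ is an internal vertex of one of the paths $P_i, Q_i, R_i$ lying away from the branch vertices, then decontracting $e$ simply subdivides that path (or leaves $H$ untouched), and by the subdivision construction $H$ lifts to a quad $H'$ in $G$ with $H'/e=H$; I would check that $q$ is then exactly the quad $2$-cycle $q_{s,t}$ on $H'$, giving the first alternative.

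**The obstructed cases: $w$ is a branch vertex.**

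The real work is when $w$ is one of the four axle vertices $a,b,c,d$ or one of the attachment vertices $u_i\in V(P_i)$, $v_i\in V(R_i)$ where the middle paths $Q_i$ meet. In these cases decontracting $e$ splits $w$ into $u$ and $v$, and the three (or more) paths of $H$ incident with $w$ get distributed between $u$ and $v$ according to how $q$ was defined on $G$. I would use the defining condition of a $2$-cycle: since $q\in L(G)$, we have $q(e,g)=q(g,e)=0$ for every edge $g$ sharing an endpoint with $e$, and $q(\cdot, \delta(u))=q(\delta(u),\cdot)=0$. These circulation conditions at $u$ and $v$ tightly constrain how the oriented cycles $C_2, C_3, D_2, D_3$ (whose difference $\chi_{C_2,D_3}-\chi_{C_3,D_2}$ is the quad $2$-cycle) can pass through the split vertex. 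The aim is to show that either the paths redistribute so as to form a genuine quad $H'$ in $G$ (recovering the first alternative), or they fail to do so in a way that forces $q$ to decompose as a combination of circuit-pair $2$-cycles, landing in $B(G)$.

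**Reducing to circuit-pair combinations.**

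For the degenerate subcase, the idea is that when $w$ splits in a way incompatible with the quad's path-structure, the cycles $C_2, C_3$ and $D_2, D_3$ used in defining $q$ become vertex-disjoint in $G$ (the contraction was what forced them to share the vertex $w$). Then each term $\chi_{C_i,D_j}$ is literally a circuit-pair $2$-cycle, and $q=\chi_{C_2,D_3}-\chi_{C_3,D_2}\in B(G)$ directly. I expect the main obstacle to be the bookkeeping in enumerating the cases for where $w$ sits and how the edges incident with $u$ and $v$ carry the nonzero values of $q$; one must verify that in every configuration the circulation constraints at $u$ and $v$ leave no room for a ``mixed'' object that is neither a quad $2$-cycle nor a member of $B(G)$. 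Invoking Lemma~\ref{lem:contrKur}\eqref{item:disjcircuits} for the relevant disjoint-cycle liftings should streamline the argument that lifted cycles remain circuit-pairs. The cleanest organization is to fix the left side $\{s,t\}$ and the orientations as in the definition of $q_{s,t}$, then show the lift respects these choices exactly when a quad $H'$ exists, and collapses to $B(G)$ otherwise.
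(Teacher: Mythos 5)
Your plan follows the paper's proof essentially verbatim: both argue by cases on the position and degree of the contracted vertex $v_e$ in the quad $H$ (if $v_e\notin V(H)$ or $v_e$ has degree at most three in $H$, the lift $H'$ is again a quad, so the only troublesome case is the degree-four vertex, i.e.\ a zero-length $Q_i$), and both dispose of the degenerate split --- where $P_1$ and $R_1$ come to share the edge $e$ --- by observing that $C_2,D_3$ and $C_3,D_2$ become vertex-disjoint in $G$, so that $q=\chi_{C_2,D_3}-\chi_{C_3,D_2}\in B(G)$ directly. The only difference is that the paper pinpoints the bad case purely by the degree of $v_e$ in $H$ while you propose enumerating all branch vertices; carrying out your bookkeeping would collapse to that same single degenerate configuration.
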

\begin{proof}
Let $v_e$ be the vertex in $G/e$ obtained by contracting $e$. If $v_e$ has degree $(\leq 3)$ in $G/e$, then $G$ is a quad and $q$ is a quad $2$-cycle on $G$.

Suppose next that $v_e$ has degree four in $G/e$.  Then, either $G$ is a quad and $q$ is a quad $2$-cycle on $G$, or $G$ can be written as 
$P_1\cup P_2\cup P_3\cup Q_1\cup Q_2\cup Q_3\cup R_1\cup R_2\cup R_3$, consisting of
\begin{enumerate}[(1)]
\item four distinct vertices $a,b,c,d$;
\item three paths $P_1,P_2,P_3$ between $a$ and $b$, mutually internally disjoint, and each with at least one internal vertex;
\item internally disjoint paths $R_1,R_2,R_3$ between $c$ and $d$, mutually internally disjoint, and each with at least one internal vertex;
\item two paths $Q_2,Q_3$, mutually disjoint, such that for $i=2,3$, $Q_i$ has ends $u_i$ and $v_i$, where $u_i\in V(P_i) \setminus \{a,b\}$, $v_i\in V(R_i)\setminus \{c,d\}$, and no vertex of $Q_i$, except for $u_i$ and $v_i$ belong to $V(P_1\cup P_2\cup P_3\cup R_1\cup R_2\cup R_3)$;
\item for $i=2,3$, $V(P_i\cap (R_1\cup R_2\cup R_3))\subseteq V(Q_i)$.
\item $P_1$ is disjoint from $R_2\cup R_3$, and $P_1\cap R_1$ is a path of length one and $\{a,b,c,d\}\cap V(P_1\cap R_1) = \emptyset$.
\end{enumerate}
Let $C_2, C_3, D_2, D_3$ be the oriented cycles as in the definition of the quad $2$-cycles $q_{s,t}$ on $G/e$. Since 
$C_2$ and $D_3$ are disjoint and $C_3$ and $D_2$ are disjoint in $G$, $q\in B(G)$.
\end{proof}


\section{$2$-cycles of a graph at an edge}

Let $e=uv$ be an edge of a graph $G$. For any $d\in L(G)$, we define 
\begin{equation*}
P_{u,v}(d) = d_{\restriction \delta(u)\times \delta(v)}.
\end{equation*} 
If $d$ is a symmetric $2$-cycle, then $P_{u,v}(d) = P_{v,u}(d)$, and if $d$ is a skew-symmetric $2$-cycles, then $P_{u,v}(d) = -P_{v,u}(d)$.

Let $d\in L(G)$. We want to show that that there exist cycle-pair $2$-cycles $d_{C_i, D_i}$, $i=1,\ldots,k$, quad $2$-cycles $q_i$, $i=1,\ldots,\ell$, and Kuratowski $2$-cycles $d_{H_i}$, $i=1,\ldots, m$ such that $d' = d - \sum_{i=1}^k d_{C_i, D_i} - \sum_{i=1}^{\ell} q_i - \sum_{i=1}^m d_{H_i}$ satisfies $d'(f,g) = d'(g, f) = 0$ for all $f\in \delta(u)$ and all $g\in \delta(v)$. This can be formulated as $P_{u,v}(d') = 0$ and $P_{v,u}(d') = 0$.

If $f_1, f_2 \in \delta(u)\setminus \{e\}$ are distinct edges and $g_1, g_2 \in \delta(v)\setminus \{e\}$ are distinct edges, we denote by $[f_1,f_2; g_1,g_2]_{\delta(u)\times\delta(v)} = [f_1,f_2; g_1,g_2]$ the function $c : \delta(u)\times \delta(v)\to \mathbb{Z}$ with $c(f_1,g_1) = 1$, for $i=1,2$, 
\begin{align*}
[u,f_1]c(f_1,g_i) + [u, f_2]c(f_2, g_i) & = 0,\\
[v, g_1]c(f_i, g_1) + [v, g_2] c(f_i, g_2) & = 0,
\end{align*}
and $c(f,g)=0$ if $f\not\in \{f_1,f_2\}$ or $g\not\in \{g_1, g_2\}$. We will refrain from using the notation $[f_1,f_2; g_1,g_2]_{\delta(u)\times\delta(v)}$ and use only the notation $[f_1,f_2; g_1,g_2]$ as this will cause no confusion. We call the function $[f_1,f_2; g_1,g_2]$ a \emph{$4$-cross at $u,v$}.

If $f_1= uz_1, f_2 = uz_2, f_3 = uz_3, g_1 = vz_1, g_2 = vz_2, g_3 = vz_3$ are distinct edges, we denote by $[f_1, f_2, f_3; g_1, g_2, g_3]_{\delta(u)\times\delta(v)} = [f_1, f_2, f_3; g_1, g_2, g_3]$ the function $c : \delta(u)\times \delta(v)\to \mathbb{Z}$ with $c(f_1,g_2) = 1$,
\begin{align*}
[v, g_2]c(f_1, g_2) + [v, g_3]c(f_1, g_3) & = 0,\\
[u, f_1]c(f_1, g_3) + [u, f_2] c(f_2, g_3) & = 0,\\
[v, g_3] c(f_2, g_3) + [v, g_1] c(f_2, g_1) &= 0,\\
[u, f_2] c(f_2, g_1) + [u, f_3] c(f_3, g_1) &= 0,\\ 
[v, g_1] c(f_3, g_1) + [v, g_2] c(f_3, g_2) &= 0,\\
[u, f_3] c(f_3, g_2) + [u, f_1] c(f_1, g_2) &= 0,
\end{align*}
$c(f,g)=0$ if $f\not\in \{f_1,f_2,f_3\}$ or $g\not\in \{g_1, g_2, g_3\}$, and $c(f_1,g_1) = c(f_2, g_2) = c(f_3, g_3) = 0$. We call the function $[f_1,f_2,f_3; g_1, g_2,g_3]$ a \emph{$6$-cross at $u,v$}.

If $C$ and $D$ are disjoint cycles of $G$ with $f_1,f_2\in E(C)$ and $g_1,g_2\in E(D)$, then there exist orientations of $C$ and $D$ such that $P_{u,v}(d_{C,D}) = [f_1,f_2; g_1,g_2]$. Observe that $P_{v,u}(d_{C,D}) = 0$.
Let $K = P_1\cup P_2\cup P_3\cup Q_1\cup Q_2\cup Q_3\cup R_1\cup R_2\cup R_3$ be a quad on $G$, where we assume that $P_1,P_2,P_3, Q_1,Q_2,Q_3, R_1, R_2, R_3$ are as in the definition of a quad. Let $\{s,t\}$ be the left side of the quad. Suppose $Q_1$ has length $1$ and $R_{R,1}$ consists of the edge $e=uv$ only, where $u\in V(Q_1)$. Let $g_1,g_2\in \delta(v)\setminus\{e\}$ be distinct edges and let $f_1,f_2\in \delta(u)$ with $f_1\in E(P_{L,1})$ and $f_2\in E(R_{L,1})$. If $q$ is a quad $2$-cycle on $K$ with left side $\{s,t\}$ and $q(f_1,g_1) = 1$, then $P_{u,v}(q) = [f_1,f_2; g_1,g_2]$ and $P_{v,u}(q) = 0$. Let $H$ be a $K_{3,3}$-subdivision in $G$, where one arc of $H$ consist of the edge $e=uv$ only. Let $f_1,f_2$ and $g_1,g_2$ be the edges incident with $u$ and $v$, respectively, that are unequal to $e$. If $d_H$ is a Kuratowski $2$-cycle on $H$ with $d_H(f_1,g_1) = 1$, then $P_{u,v}(d_H) = [f_1,f_2; g_1,g_2]$ and $P_{v,u}(d_H) = [g_1,g_2; f_1,f_2]$. 
Let $H$ be a $K_5$-subdivision of $G$, where one arc of $H$ consists of the edge $e=uv$ only. Let $f_1,f_2,f_3$ and $g_1,g_2,g_3$ be the edges incident with $u$ and $v$, respectively, that are unequal to $e$ such that $f_i$ and $g_i$ are adjacent for $i=1,2,3$. If $d_H$ is a Kuratowski $2$-cycle on $H$ with $d_H(f_1,g_1) = 1$, then $P_{u,v}(d_H) = [f_1,f_2,f_3; g_1,g_2,g_3]$ and $P_{v,u}(d_H) = [g_1,g_2,g_3; f_1,f_2,f_3]$.

If $e=uv$ is an edge, we denote by $B_{u,v}(G)$ the subgroup of $B(G)$ generated by the set of all cycle-pair $2$-cycles $d_{C, D}$ with $u\in V(C)$ and $v\in V(D)$. 

\begin{lemma}\label{lem:Kuratowskiconnected2}
Let $G$ be a graph, let $e=uv$ be an edge, and let $H := G-\{u,v\}$. Let $z_1,z_2,z_3\in N_H(u)\cap N_H(v)$ be distinct vertices and let $z\in N(\{u,v\})\setminus \{z_1,z_2,z_3\}$. If there exists a cycle $C$ in $H$ such that $z_1,z_2,z_3,z$ occur in this order on $C$, then 
$\mat{uz}{uz_2}{vz_1}{vz_3} - [uz_1, uz_2, uz_3; vz_1, vz_2, vz_3]\in B_{u,v}(G).$
\end{lemma}

\begin{proof}
By symmetry, we may assume that $z\in N_H(u)$. Let $P_1, Q_1$ be disjoint paths in $C$, with $P_1$ connecting $z$ to $z_3$, and $Q_1$ connecting $z_1$ to $z_2$. Let $P_2, Q_2$ be disjoint paths in $C$, with $P_2$ connecting $z$ and $z_1$, and $Q_2$ connecting $z_2$ and $z_3$. Let $C_1=uzP_1z_3u$, let $D_1=vz_1Q_1z_2v$, let $C_2=uzP_2z_1u$, and $D_2=vz_2Q_2z_3v$.
Orient $C_1, D_1$ and $C_2, D_2$ such that $P_{u,v}(d_{C_1, D_1}) = \mat{uz}{uz_3}{vz_1}{vz_2}$
and $P_{u,v}(d_{C_2, D_2}) = \mat{uz}{uz_1}{vz_2}{vz_3}.$ Then
\begin{equation*}
P_{u,v}(d_{C_1, D_1} + d_{C_2, D_2}) = \mat{uz}{uz_2}{vz_1}{vz_3} - [uz_1, uz_2, uz_3; vz_1, vz_2, vz_3].
\end{equation*}
Hence $\mat{uz}{uz_2}{vz_1}{vz_3} - [uz_1, uz_2, uz_3; vz_1, vz_2, vz_3]\in B_{u,v}(G)$.
\end{proof}

The proof of the following lemma is easy.
\begin{lemma}\label{lem:cyclebuilt2}
Let $H$ be a $2$-connected graph and let $u_1,u_2,v_1,v_2$ be distinct vertices of $H$.
Then there exists a cycle $C$ of $H$ containing the vertices $u_1,u_1, v_1, v_2$.
\end{lemma}

\begin{lemma}\label{lem:4cross6cross}
Let $e=uv$ be an edge of a $3$-connected graph $G$ such that $G/e$ is $3$-connected. Let $H=G-\{u,v\}$, and let $d$ be a $2$-cycle on $G$. If $N(\{u,v\})$ contains at least four vertices, then $P_{u,v}(d) =\sum \alpha_i F_i$, where each $F_i$ is a $4$-cross at $u,v$ and each $\alpha_i$ is an integer, and if $N(\{u,v\})$ contains exactly three vertices, then $P_{u,v}(d) = \alpha K$ for some $6$-cross $K$ at $u,v$ and some integer $\alpha$.
\end{lemma}
\begin{proof}
Order the edges in $\delta (u) \setminus \{ e \}$ as
$f_1,\ldots, f_k$ in such a way that we start with the edges that connect
$u$ to a neighbor of $v$. Similarly, we order the edges in
$\delta (v)\setminus \{ e \}$ as $g_1, \ldots, g_{\ell}$ in such a way that we
start with the edges that connect $v$ to a neighbor of $u$. 
For $t=1,\ldots,k$, let $f_t$ have ends $u$ and $u_t$. For $t=1,\ldots,\ell$, let $g_t$ have ends $v$ and $v_t$. 

Choose $i$ and $j$ with $d(f_i, g_j)\not= 0$ and $i + j$ minimal. 
Let $f_{i'}$ be an edge in the support of $d(\cdot,g_j)$
that is unequal to $f_i$, and let $g_{j'}$ be an
edge in the support of $d(f_i,\cdot)$ that is unequal to $g_j$.
These edges exist since $d(\cdot,g_j)$ and $d(f_i,\cdot)$ are circulations.
Since $f_i$ and $g_j$ are nonadjacent, we know that $u_i\not=v_j$.
Similarly, we know that $u_i\not=v_{j'}$ and $v_j\not=u_{i'}$. We consider
now several cases.

In the first case we assume $u_{i'}\not=v_{j'}$. Then $\mat{f_i}{f_{i'}}{g_j}{g_{j'}}$ is a $4$-cross at $u,v$. Replacing $d$ by $d- d(f_i,g_j)\mat{f_i}{f_{i'}}{g_j}{g_{j'}}$ gives a reduction using $i'>i,j'>j$. 

In the second case we assume that $u_{i'}=v_{j'}$. By the orderings of the edges $f_1,f_2,\ldots, f_k$ and $g_1,g_2,\ldots, g_{\ell}$ and by the minimality of $i+j$, $u_i$ is adjacent to $v$, and $v_j$ is adjacent to $u$. So each of $u_i,v_j$ and $u_{i'} (= v_{j'})$ is
adjacent to $u$ and $v$. Then $[f_i, f_{i'}, f_j; g_{j}, g_{j'}, g_i]$ is a $6$-cross at $u,v$.  Replacing $d$ by $d- d(f_i,g_j)[f_i, f_{i'}, f_j; g_{j}, g_{j'}, g_i]$ gives a reduction using $i'>i,j'>j$. 

Since $G/e$ is $3$-connected, $G-\{u,v\}$ is $2$-connected. If $N(\{u,v\})$ has at least four vertices, let $z$ be a vertex in $N(\{u,v\})\setminus \{u_i,v_j,u_{i'}\}$. By Lemma~\ref{lem:cyclebuilt2}, there exists a cycle $C$ in $G-\{u,v\}$ containing the vertices $u_i, v_j, u_{i'}, z$.  By Lemma~\ref{lem:Kuratowskiconnected2}, $[f_i, f_{i'}, f_j; g_{j}, g_{j'}, g_i]$ is a sum of a $4$-cross at $u,v$ and an element of $P_{u,v}(B_{u,v}(G))$.
\end{proof}

\section{Obtaining a cycle}

Let $e=uv$ be an edge of a $3$-connected graph $G$ such that $G/e$ is $3$-connected, and let $d$ be a $2$-cycle on $G$. Suppose that $N(\{u,v\})$ has at least four vertices. In this section, we show that if there are no cycle-pair $2$-cycles $d_{C_i, D_i}$, $i=1,\ldots,k$, and quad $2$-cycle $q_i$, $i=1,\ldots,\ell$, such that $d' = d-\sum_{i=1}^k d_{C_i, D_i} - \sum_{i=1}^{\ell} q_i$ satisfies $P_{u,v}(d') = 0$ and $P_{v,u}(d') = 0$, then there exists a cycle $C$ in $G-\{u,v\}$ such that $N(\{u,v\})\subseteq V(C)$.

Let $C$ be an oriented cycle of a graph $G$. If $u,v\in V(C)$, we denote by $C[u,v]$ the path in $C$ when traversing $C$ from $u$ to $v$ in forward direction. By $C(u,v)$ we denote $C[u,v]-\{u,v\}$.

For a walk $W_1$ from vertex $s_1$ to vertex $s_2$ and a walk $W_2$ from $s_2$ to $s_3$, we denote by $W_1W_2$ the walk from $s_1$ to $s_3$ by concatenating $W_1$ and $W_2$. We view an edge $f=st$ as a walk from $s$ to $t$.

\begin{lemma}\label{lem:cyclesum}
Let $G=(V,E)$ be a graph, let $e=uv$ be an edge of $G$, let $H = G-\{u,v\}$. Let $C$ be a cycle of $H$ and let $u_1,v_1,u_2,v_2$ be distinct vertices on $C$ in this order, such that $u_1,u_2\in N(u)$, and $v_1,v_2\in N(v)$. If there exists a component $B$ of $H-V(C)$ with a vertex in $N(u)$, such that $N_H(B)$ has a vertex in $C(v_2,v_1)$ and a vertex in $C(v_1,v_2)$, then there exist cycle-pair $2$-cycles $d_{C_1,D_1},d_{C_2, D_2}$ such that $P_{u,v}(d_{C_1, D_1} + d_{C_2, D_2}) = \mat{uu_1}{uu_2}{vv_1}{vv_2}$. 
\end{lemma}
\begin{proof}
Let $u'$ be a vertex of $B$ adjacent with $u$. Let $P_1$ and $P_2$ be paths in $G[N_H(B)\cup V(B)]$ connecting $u_1$ and $u'$, and $u_2$ and $u'$, respectively, with internal vertices in $B$. Let $C_1=uu_1P_1u'u$, let $D_1=vv_1C[v_1,v_2]v_2v$, let $C_2=uu_2P_2u'u$, and let $D_2=vv_2C[v_2,v_1]v_1v$. Let $d_{C_1, D_1}(u_1u, v_1v) = 1$ and $d_{C_2, D_2}(uu_2, vv_2) = 1$. Then $P_{u,v}(d_{C_1,D_1}+d_{C_2, D_2}) = \mat{uu_1}{uu_2}{vv_1}{vv_2}$.
\end{proof}

Let $u_1,u_2,v_1,v_2$ be vertices in $V$. A \emph{$(u_1u_2; v_1v_2)$-linkage} in $G$ is a pair of disjoint paths $(P_1, P_2)$ in $G$ such that $P_1$ connects $u_1$ and $u_2$, and $P_2$ connected $v_1$ and $v_2$.

\begin{lemma}\label{lem:cyclebuilt}
Let $H$ be a $2$-connected graph and let $u_1,u_2,v_1,v_2$ be distinct vertices of $H$.
If there exist no $(u_1u_2; v_1v_2)$-linkage in $H$, then there exists a cycle $C$ of $H$ such that $u_1,v_1,u_2,v_2$ occur in this order on $C$.
\end{lemma}
\begin{proof}
Since $H$ is $2$-connected and there is no $(u_1u_2; v_1v_2)$-linkage in $H$, there exist $(u_1v_1; u_2v_2)$- and $(u_1v_2; u_2v_1)$-linkages in $H$. Hence there exist a cycle $C$ and paths $P_1, Q_1, P_2, Q_2$ connecting $C$ and $u_1, v_1, u_2, v_2$, respectively, with the ends of $P_1, Q_1, P_2, Q_2$ on $C$ in the specified order.
Choose $C$, $P_1, Q_1, P_2,$ and $Q_2$ such that the sum of the lengths of $P_1, Q_1, P_2,$ and $Q_2$ is minimal. Let $z$ be the end of $P_1$ on $C$. Suppose $P_1$ has length $>0$. Since $H$ is $2$-connected, there exists a path $P_1'$ from $u_1$ to $Q_1\cup P_2\cup Q_2\cup C$ disjoint from $z$. Let $z_1,z_2$ be the ends of $Q_1,Q_2$ on $C$, respectively, and let $R$ be the path in $C$ between, and including $z_1$ and $z_2$, containing $z$. Then $P_1'$ ends on $Q_1\cup Q_2\cup R$. Then we can find a cycle $C'$ and paths $P_1, Q_1, P_2, Q_2$ connecting $C$ and $u_1, v_1, u_2, v_2$, respectively, such that the sum of the lengths of $P_1, Q_1, P_2,$ and $Q_2$ is smaller, a contradiction. In the same way, $Q_1, Q_2,$ and $P_2$ have zero length.
\end{proof}

Let $C$ be an oriented cycle of a graph $G$. If $A, B\subseteq V(C)$, we say that $A$ and $B$ \emph{cross in $C$} if there exist distinct vertices $v_1,v_2,v_3,v_4\in V(C)$, with $v_1,v_3\in A$ and $v_2,v_4\in B$ such that $v_1,v_2,v_3,v_4$ occur in this order in $C$.

\begin{lemma}\label{lem:tocycle}
Let $G$ be a $3$-connected graph and let $e=uv$ be an edge of $G$ such that $G/e$ is $3$-connected. Let $f_1,f_2\in \delta(u)\setminus \{e\}$ and let $g_1, g_2\in \delta(v)\setminus \{e\}$ such that the ends of $f_1,f_2,g_1,g_2$ in $N(\{u,v\})$ are distinct. Then at least one of the following holds:
\begin{enumerate}
\item there exists a $d\in B_{u,v}(G)$ such that $P_{u,v}(d) = \mat{f_1}{f_2}{g_1}{g_2}$;
\item there exist a quad $2$-cycle $q$ on $G$ and a $d\in B_{u,v}(G)$ such that $P_{u,v}(q+d) = \mat{f_1}{f_2}{g_1}{g_2}$;
\item there exists an oriented cycle $C$ in $H := G-\{u,v\}$ containing all vertices of $N(\{u,v\})$, and  $N_H(u)$ and $N_H(v)$ cross in $C$.
\end{enumerate}
\end{lemma}
\begin{proof}
Let $u_1, u_2$ be the ends of $f_1,f_2$ in $N_H(u)$, respectively, and let $v_1,v_1$ be the ends of $g_1,g_2$ in $N_H(v)$, respectively. 

First suppose that there exist disjoint paths $P$ and $Q$ in $H$ with $P$ connecting $u_1$ and $u_2$, and $Q$ connecting $v_1$ and $v_2$. Then $C_1 := f_1Pf_2$ and $D_1 := g_1Qg_2$ are disjoint cycles. Orient $C_1$
and $D_1$ such that $d_{C_1, D_1}(f_1, g_1) = 1$. Then $P_{u,v}(d_{C_1, D_1}) = [f_1,f_2; g_1,g_2].$

Next suppose that such paths do not exists. Since
$G/e$ is $3$-connected, $H$ is $2$-connected. Hence, by Lemma~\ref{lem:cyclebuilt}, there exists a cycle $C$ in $H$ with $u_1, v_1, u_2, v_2$ in this order on $C$.  Choose such a cycle $C$ in $H$ containing the most vertices of $N(\{u,v\})$. Orient $C$ arbitrarily.

We now show that we may assume that $N_H(u) \subseteq V(C)$.
If there exists a component $B$ of $H-V(C)$ with a vertex in $N_H(u)$ such that $B$ has a neighbor in $C(v_2, v_1)$ and a neighbor in $C(v_1, v_2)$, then, by Lemma~\ref{lem:cyclesum}, there exist cycle-pair $2$-cycles $d_{C_1,D_1},d_{C_2, D_2}$ such that $$P_{u,v}(d_{C_1,D_1} + d_{C_2, D_2}) = [f_1,f_2; g_1,g_2].$$

We may therefore assume that $N_H(B)\subseteq C[v_2,v_1]$ or $N_H(B)\subseteq C[v_1,v_2]$ for any component $B$ of $H-V(C)$ containing a vertex in $N(u)$. Let $B$ be such a component of $H-V(C)$.  By symmetry, we may assume that $N_H(B)\subseteq C[v_2,v_1]$. For $i=1,2$, $r_i$ be the nearest vertex of $N_H(B)$ to $v_i$. Let $u_0\in V(B)\cap N(u)$.

If no vertex of $C(r_2,r_1)$ belongs to $N(\{u,v\})$, then we can find a cycle with $u_1, v_1, u_2, v_2$ occurring in this order containing more vertices of $N(\{u,v\})$; a contradiction. Hence we may assume that $C(r_2, r_1)$ contains a vertex of $N(\{u,v\})$.

Suppose now that $C(r_2, r_1)$ contains a vertex $u'\in N_H(u)$.  For $i=1,2$, let $P_i$ be a path in the subgraph $G[V(B)\cup \{r_1,r_2\}]$ connecting $u_0$ and $r_i$. As $H$ is $2$-connected, we may assume that $P_1$ and $P_2$ are internally disjoint. Let 
$$K := C\cup P_1\cup P_2\cup G[\{u_0u, u'u, f_2, e, g_1, g_2\}].$$ Then $K$ is a quad subgraph. Let $q$ be a quad $2$-cycle on $K$ with left side  $\{u_2, u'\}$ and with $q(u'u, g_1) = 1$. Then $P_{u,v}(q)= \mat{uu'}{f_2}{g_1}{g_2}.$ If $u'$ is a vertex of $C(v_2, u_1)$, let $C_1 = uu'C[u',u_1]u_1u$ and let $D_1 = g_1C[v_1, v_2]g_2$. Orient $C_1$ and $D_1$ such that $d_{C_1,D_1}(uu', g_1) = -1$. Then $$P_{u,v}(d_{C_1, D_1} + q_{u_2,u'}) = \mat{f_1}{f_2}{g_1}{g_2}.$$ The case where $u'$ is a vertex of $C(u_1, v_1)$ is similar.

Suppose next that $C(r_2,r_1)$ contains no vertex of $N_H(u)$, but a vertex $v'\in N_H(v)$. If $v'$ is a vertex of $C(u_1, v_1)$, let $C_1=f_2C[u_2,u_1]f_1$ and let $D_1=vv'C[v', v_1]g_1$. Orient $C_1$ and $D_1$ such that $d_{C_1, D_1}(f_1, g_1) = 1$. Then $P_{u,v}(d_{C_1, D_1}) = \mat{f_1}{f_2}{g_1}{v'v}.$ By Lemma~\ref{lem:cyclesum}, there exist cycle-pair $2$-cycles $d_{C_2, D_2}$ and $d_{C_3, D_3}$ such that $$P_{u,v}(d_{C_2, D_2} + d_{C_3, D_3}) = \mat{f_1}{f_2}{v'v}{g_2}.$$ Then $$P_{u,v}(d_{C_1, D_1} + d_{C_2, D_2} + d_{C_3, D_3}) = [f_1,f_2; g_1,g_2].$$ The case where $v'$ is a vertex of $C(v_2, u_1)$ is similar.
Therefore, we may assume that no component of $H-V(C)$ has a vertex of $N_H(u)$. In the same way, we may assume that no component of $H-V(C)$ has a vertex of $N_H(v)$.
Therefore, $C$ contains all vertices of $N(\{u,v\})$. Furthermore,  $N_H(u)$ and $N_H(v)$ cross in $C$.
\end{proof}

From the previous lemma and Lemmas~\ref{lem:Kuratowskiconnected2} and~\ref{lem:4cross6cross}, we obtain the following lemma.
\begin{lemma}\label{lem:tocycleb}
Let $G$ be a $3$-connected graph and let $e=uv$ be an edge of $G$ such that $G/e$ is $3$-connected. Suppose $N(\{u,v\})$ has at least four vertices. Then one of the following holds:
\begin{enumerate}
\item for any $2$-cycle on $G$, there exist a $d_1\in B_{u,v}(G)$, a $d_2\in B_{v,u}(G)$, and quad $2$-cycles $q_i$, $i=1,\ldots,k$, such that $d' = d-d_1-d_2-\sum_{i=1}^k q_i$ satisfies $P_{u,v}(d') = 0$ and $P_{v,u}(d') = 0$; or
\item there exists an oriented cycle $C$ in $H:=G-\{u,v\}$ containing all vertices of $N(\{u,v\})$, and  $N_H(u)$ and $N_H(v)$ cross in $C$.
\end{enumerate}
\end{lemma}

\section{Using the cycle}

Let $e=uv$ be an edge of a $3$-connected graph such that $G/e$ is $3$-connected, and let $d$ be a $2$-cycle on $G$. In the previous section, we proved that if there are no $d_1\in B_{u,v}(G)$, $d_2\in B_{v,u}(G)$, and quad $2$-cycles $q_i$, $i=1,\ldots,k$, such that $d' = d-d_1-d_2-\sum_{i=1}^k q_i$ satisfies $P_{u,v}(d') = 0$ and $P_{v,u}(d') = 0$, then there exists an oriented cycle $C$ in $H:=G-\{u,v\}$ containing all vertices of $N(\{u,v\})$, and  $N_H(u)$ and $N_H(v)$ cross in $C$. Let $K$ be a subdivision of $K_{3,3}$ in $G[\delta(u)\cup \delta(v)\cup E(C)]$ in which $e$ is an arc of $K$ and let $d_K$ be a Kuratowski $2$-cycle on $K$. In this section, we show, using the cycle $C$, that for any $2$-cycle $d$, there exist a $d'\in B_{u,v}(H_1)$, a $d''\in B_{v,u}(H_1)$, and integers $\alpha$ and $\beta$ such that 
\begin{align*}
P_{u,v}(d-d'-d'') &= \alpha P_{u,v}(d_K),\\ 
P_{v,u}(d-d'-d'') &= \beta P_{v,u}(d_K). 
\end{align*}

\begin{lemma}\label{lem:uniqueness}
Let $e=uv$ be an edge of a graph $G$ such that $G-\{u, v\}$ has a cycle $C$ containing all vertices in $N(\{u,v\})$. 
Let $f_1=uu_1, g_1=vv_1, f_2=uu_2, g_2=vv_2$ be distinct edges and let $\overline{f}_1=u\overline{u}_1, \overline{g}_1=v\overline{v}_1, \overline{f}_2=u\overline{u}_2, \overline{g}_2=v\overline{v}_2$ be distinct edges such that the sequences $u_1,v_1,u_2,v_2$ and $\overline{u}_1,\overline{v}_1,\overline{u}_2,\overline{v}_2$ occur in this order on $C$.
Then 
\begin{equation*}
[f_1,f_2; g_1,g_2] - [\overline{f}_1, \overline{f}_2; \overline{g}_1,\overline{g}_2] \in P_{u,v}(B_{u,v}(G)).
\end{equation*}
\end{lemma}
\begin{proof}
Suppose to the contrary that the statement of the lemma is false.
Choose the sequences  $f_1,g_1,f_2,g_2$ and $\overline{f}_1,\overline{g}_1,\overline{f}_2,\overline{g}_2$ such that 
\begin{equation*}
[f_1,f_2; g_1,g_2] - [\overline{f}_1, \overline{f}_2; \overline{g}_1,\overline{g}_2]\not\in P_{u,v}(B_{u,v}(G)),
\end{equation*}
 and $|\{f_1,f_2,g_1,g_2\}\cap \{\overline{f}_1,\overline{f}_2,\overline{g}_1,\overline{g}_2\}|$ is maximal.

First suppose $\{u_1,u_2\} = \{\overline{v}_1,\overline{v}_2\}$ and $\{v_1,v_2\}= \{\overline{u}_1,\overline{u}_2\}$. We assume that $u_1 = \overline{v}_1$, $u_2 = \overline{v}_2$, $v_1 = \overline{u}_1$, and $v_2 = \overline{u}_2$; the other cases are similar. The cycle $C$ contains a $(u_1\overline{u}_1; \overline{v}_2v_2)$-linkage $(P_1, Q_1)$, a $(\overline{u}_1u_2; v_2\overline{v}_1)$-linkage $(P_2, Q_2)$, a $(u_2\overline{u}_2; \overline{v}_1v_1)$-linkage $(P_3, Q_3)$, and a $(u_1\overline{u}_2, v_1\overline{v}_2)$-linkage $(P_4, Q_4)$. Let $C_1 = f_1P_1\overline{f}_1$, $D_1 = \overline{g}_2Q_1g_2$, $C_2 = \overline{f}_1P_2f_2$, $D_2 = \overline{g}_1Q_2g_2$, $C_3 = f_2P_3\overline{f}_2$, $D_3 = \overline{g}_1Q_3g_1$, $C_4 = f_1P_4\overline{f}_2$, and $D_4 = g_1Q_4\overline{g}_2$. Orient the cycles $C_1,\ldots, C_4$ and $D_1, \ldots, D_4$ such that $P_{u,v}(d_{C_1, D_1}) = \mat{\overline{f}_1}{f_1}{g_2}{\overline{g}_2}$,
$P_{u,v}(d_{C_2, D_2}) = \mat{f_2}{\overline{f}_1}{\overline{g}_1}{g_2}$,
$P_{u,v}(d_{C_3, D_3}) = \mat{\overline{f}_2}{f_2}{g_1}{\overline{g}_1}$,
$P_{u,v}(d_{C_4, D_4}) = \mat{f_1}{\overline{f}_2}{\overline{g}_2}{g_1}.$
Then
\begin{equation*}
[f_1,f_2; g_1,g_2] - [\overline{f}_1, \overline{f}_2; \overline{g}_1,\overline{g}_2] = d_{C_1, D_1} - d_{C_2, D_2} + d_{C_3, D_3} - d_{C_4, D_4}\in P_{u,v}(B_{u,v}(G))
\end{equation*}

We may therefore assume that $\{u_1,u_2\}\not=\{\overline{v}_1,\overline{v}_2\}$ or $\{v_1,v_2\}\not=\{\overline{u}_1,\overline{u}_2\}$. By symmetry, we may assume that $\overline{u}_1 \not\in \{v_1,v_2\}$. Suppose that $\overline{f}_1\not\in\{f_1,f_2\}$. If $\overline{u}_1$ is a vertex on $C(v_2, v_1)$, let $P$ be a path in $C(v_2, v_1)$ with ends $\overline{u}_1$ and $u_1$. Let $C_1=u\overline{u}_1Pu_1u$
and let $D_1 = vv_1C[v_1, v_2]v_2v$. Orient $C_1$ and $D_1$ such that $P_{u,v}(d_{C_1, D_1}) =  [f_1, \overline{f}_1; g_1, g_2]$.
Then $[f_1,f_2; g_1,g_2]-[\overline{f}_1,f_2; g_1,g_2] = P_{u,v}(d_{C_1, D_1})$, and therefore 
\begin{equation*}
[\overline{f}_1,f_2; g_1,g_2] - [\overline{f}_1, \overline{f}_2; \overline{g}_1,\overline{g}_2]\not\in P_{u,v}(B_{u,v}(G)).
\end{equation*}
However,  $|\{\overline{f}_1,f_2,g_1,g_2\}\cap \{\overline{f}_1,\overline{f}_2,\overline{g}_1,\overline{g}_2\}|$ is larger, a contradiction. The case where $\overline{u}_1$ is a vertex on $C(v_1, v_2)$ is similar. Hence  $\overline{f}_1\in\{f_1,f_2\}$. We assume that $\overline{f}_1=f_1$.

Suppose now that $g_1 \not= \overline{g}_1$ and $g_2\not=\overline{g}_2$. If $\overline{v}_1$ is a vertex on $C(u_1, u_2)$, let $Q$ be a path in $C(u_1, u_2)$ with ends $v_1$ and $\overline{v}_1$. Let $C_1 = uu_2C[u_2,u_1]u_1u$ and let $D_1=vv_1Q\overline{v}_1v$. Orient $C_1$ and $D_1$ such that $P_{u,v}(d_{C_1, D_1}) =  [f_1, f_2; g_1, \overline{g}_1]$. Then $[f_1,f_2; g_1,g_2]-[f_1,f_2; \overline{g}_1,g_2] = P_{u,v}(d_{C_1, D_1})$, and therefore 
\begin{equation*}
[f_1,f_2; \overline{g}_1,g_2] - [f_1, \overline{f}_2; \overline{g}_1,\overline{g}_2]\not\in P_{u,v}(B_{u,v}(G)).
\end{equation*}
However,  $|\{f_1,f_2,\overline{g}_1,g_2\}\cap \{f_1,\overline{u}_2,\overline{g}_1,\overline{g}_2\}|$ is larger, a contradiction. If $\overline{v}_1$ is not a vertex on $C(u_2, u_1)$, then $\overline{v}_2$ is a vertex on $C(u_2, u_1)$. This case is similar. Hence $g_1 = \overline{g}_1$ or $g_2=\overline{g}_2$. We assume that $g_1 = \overline{g}_1$; the case where $g_2=\overline{g}_2$ is similar.

Suppose now that $g_2 \not= \overline{g}_2$ and $f_2 \not= \overline{f}_2$. If $\overline{v}_2$ is a vertex on $C(u_2, u_1)$, let $Q$ be a path in $C(u_2, u_1)$ with ends $v_2$ and $\overline{v}_2$. Let $C_1 = uu_1C[u_1,u_2]u_2u$ and let $D_1 = vv_2Q\overline{v}_2v$. Orient $C_1$ and $D_1$ such that $P_{u,v}(d_{C_1, D_1}) =  [f_1, f_2; g_2, \overline{g}_2]$. Then $[f_1,f_2; g_1,g_2]-[f_1,f_2; g_1,\overline{g}_2] = P_{u,v}(d_{C_1, D_1})$, and therefore 
\begin{equation*}
[f_1,f_2; g_1,\overline{g}_2] - [f_1, \overline{f}_2; g_1,\overline{g}_2]\not\in P_{u,v}(B_{u,v}(G)). 
\end{equation*}
However,  $|\{f_1,f_2,g_1,g_2\}\cap \{f_1,\overline{f}_2,g_1,g_2\}|$ is larger, a contradiction. Hence, $g_2 = \overline{g}_2$ if $\overline{v}_2$ is a vertex on $C(u_2, u_1)$.

If $\overline{v}_2$ is not a vertex on $C(u_2, u_1)$, then $\overline{u}_2$ is a vertex on $C(v_1, v_2)$. Let $P$ be a path in $C(v_1, v_2)$ with ends $u_2$ and $\overline{u}_2$. Let $C_1 = uu_2P\overline{u}_2u$ and let $D_1 = C[v_2, v_1]$. 
Orient $C_1$ and $D_1$ such that $P_{u,v}(d_{C_1, D_1}) =  [f_2, \overline{f}_2; g_1, g_2]$. Then $[f_1,f_2; g_1,g_2]-[f_1,\overline{f}_2; g_1,g_2] = P_{u,v}(d_{C_1, D_1})$, and therefore 
\begin{equation*}
[f_1,f_2; g_1,g_2] - [f_1, f_2; g_1,\overline{g}_2]\not\in P_{u,v}(B_{u,v}(G)). 
\end{equation*}
However,  $|\{f_1,f_2,g_1,g_2\}\cap \{f_1,f_2,g_1,\overline{g}_2\}|$ is larger, a contradiction.

A similar argument now shows that $f_2 = \overline{f}_2$ if in the previous case $\overline{v}_2$ is a vertex on $C(u_2, u_1)$ and that $g_2 = \overline{g}_2$ if in the previous case $\overline{v}_2$ is not a vertex on $C(u_2, u_1)$. 
\end{proof}

.
\begin{lemma}\label{lem:oncycle}
Let $G$ be a graph and let $e=uv$ be an edge of $G$. Suppose $C$ is an oriented  cycle of $G-\{u,v\}$ containing all vertices of $N(\{u,v\})$.
Let $h_1, h_2\in \delta(u)\setminus \{e\}$ and $k_1,k_2\in \delta(v)\setminus \{e\}$ be distinct edges such that the ends $w_1,z_1,w_2,z_2$ of $h_1,k_1,h_2,k_2$ on $C$, respectively, occur in the given order.
Then, for any $2$-cycle $d$ on $G$, there exist a $d'\in B_{u,v}(G)$ and an integer $\alpha$ such that $P_{u,v}(d+d')=\alpha \mat{h_1}{h_2}{k_1}{k_2}.$
\end{lemma}

\begin{proof}
As $N(\{u,v\})$ has at least four vertices, $P_{u,v}(d) = \sum \alpha_i F_i,$ where each $F_i$ is a $4$-cross at $u,v$ and each $\alpha_i$ is an integer.  We show that for each $F_i$, there exist a $c\in B_{u,v}(G)$ and an integer $\alpha_i$ such that $F_i = P_{u,v}(c) + \alpha_i \mat{h_1}{h_2}{k_1}{k_2}$. 

Consider a $4$-cross $F_i = \mat{f_1}{f_2}{g_1}{g_2}$. Let $u_1,u_2$ be the ends of $f_1,f_2$, respectively, that is unequal to $u$, and let $v_1,v_2$ be the ends of $g_1,g_2$, respectively, that is unequal to $v$. If $\{u_1, u_2\}$ and $\{v_1, v_2\}$ do not cross in $C$, then, using the cycle $C$, we can find disjoint cycles $C_1$ and $D_1$ with $u,u_1,u_2\in V(C_1)$ and $v,v_1,v_2\in V(D_1)$. Then $P_{u,v}(d_{C_1, D_1}) = \mat{f_1}{f_2}{g_1}{g_2}$. If $\{u_1, u_2\}$ and $\{v_1, v_2\}$ cross in $C$, then, by Lemma~\ref{lem:uniqueness}, there exists a $c\in B_{u,v}(G)$ such that either 
$\mat{f_1}{f_2}{g_1}{g_2} = \mat{h_1}{h_2}{k_1}{k_2}  + P_{u,v}(c)$ or
 $\mat{f_1}{f_2}{g_1}{g_2} = -\mat{h_1}{h_2}{k_1}{k_2} + P_{u,v}(c).$
\end{proof}

Let $e=uv$ be an edge of a $3$-connected graph such that $G/e$ is $3$-connected. Suppose that $C$ is a cycle of $G-\{u,v\}$ containing all vertices of $N(\{u,v\})$. Let $H_1 = G[\delta(u)\cup \delta(v)\cup E(C)]$, and let $K$ be a subdivision of $K_{3,3}$ in $H_1$ in which $e$ is an arc.
If $d$ is a $2$-cycle on $G$, then, by Lemma~\ref{lem:oncycle}, there exist a $d_1\in B_{u,v}(H_1)$, a $d_2\in B_{v,u}(H_1)$, and integers $\alpha$ and $\beta$ such that 
\begin{align*}
P_{u,v}(d-d_1-d_2) &= \alpha P_{u,v}(d_K),\\ 
P_{v,u}(d-d_1-d_2) &= \beta P_{v,u}(d_K). 
\end{align*}
If we prove that $\alpha=\beta$, then $d' := d-d_1-d_2-\alpha d_K$ satisfies $d'(f,g) = d'(g,f) = 0$ for all $f\in \delta(u)$ and all $g\in \delta(v)$.
In the next sections, we will use topological arguments along with additional conditions on the graph $G$ to show that $\alpha=\beta$.

\section{The Intersection Number}

Let $\phi$ be a drawing of a graph $G=(V,E)$ in the plane that is in  generic position. If $g, f$ are nonadjacent edges $G$, we denote by $\kr_{\phi}(g,f)$ the crossing number of $g$ and $f$ in the drawing $\phi$.  Observe that $\kr_{\phi}(g,f) = -\kr_{\phi}(f,g)$ and $\kr_{\phi}(f,g)\in \mathbb{Z}$ for all pairs $f, g$ of nonadjacent edges of $G$. If $d\in L(G)$, we define 
\begin{equation*}
\kr_{\phi}(d) = \sum_{f,g\in E} \kr_{\phi}(f,g) d(f, g) .
\end{equation*}

\begin{lemma}
Let $\phi_1$ and $\phi_2$ be drawings of a graph $G$ in the plane that are in generic position, and let $d$ be a $2$-cycle on $G$. Then $\kr_{\phi_1}(d) = \kr_{\phi_2}(d)$.
\end{lemma}
\begin{proof}
When going from $\phi_1$ to $\phi_2$, the only changes to $\kr_{\phi_1}(d)$ occur when we pull a vertex $v$ through an edge $e$ that is not incident with $v$, or when we pull an edge $e$ that is not incident with a vertex $v$ through $v$. Since $\sum_{f\in E(G)} [v, f]d(e, f) = 0$ and $\sum_{f\in E(G)} [v, f] d(f, e)=0$ for all $v\in V(G)$, we see that $\kr_{\phi_1}(d) = \kr_{\phi_2}(d)$.
\end{proof}

\begin{lemma}\label{lem:crzero}
Let $\phi$ be a drawing of a graph $G$ in the plane that is in generic position. Then, for any $2$-cycle $d$ on $G$, $\kr_{\phi}(d) = 0$.
\end{lemma}
\begin{proof}
Let $\phi_1$ be a drawing of $G$ in the plane and let $\phi_2$ be the mirror drawing of $\phi_1$. Let $d$ be a $2$-cycle on $G$. Then $\kr_{\phi_1}(d) = -\kr_{\phi_2}(d)$. By the previous lemma, $2\kr_{\phi_1}(d) = 0$. Hence $\kr_{\phi_1}(d)=0$.
\end{proof}

\section{No disjoint paths}

Let $e=uv$ be an edge of a $3$-connected graph such that $G/e$ is $3$-connected.
Suppose that $N(\{u,v\})$ has at least four vertices and that $H := G-\{u,v\}$ has an oriented  cycle $C$ containing all vertices in $N(\{u,v\})$. If $P, Q$ are disjoint paths with ends $r_1,r_2\in V(C)$ and $s_1,s_2\in V(C)$, respectively, and $r_1,s_1,r_2,s_2$ or $r_1,s_2,r_2,s_1$ occur in this order on $C$, we say that $P,Q$ are \emph{$C$-crossing} paths on $C$. Let $d$ be a $2$-cycle on $G$. In this section, we show that if there exist no $d_1\in B_{u,v}(G)$, $d_2\in B_{v,u}(G)$, and quad $2$-cycles $q_i$ on $G$, $i=1,\ldots,k$, such that $d' = d-d_1-d_2-\sum_{i=1}^k q_i$ satisfies $P_{u,v}(d') = 0$ and $P_{v,u}(d') = 0$, then there are no two disjoint $C$-crossing paths in $H$ with ends in $N(\{u,v\})$.

Let $C$ be an oriented cycle of a graph $G$.
If $u, v\in V(C)$ are distinct, we say that $A$ and $B$ \emph{cross in $C(u,v)$} if there exist distinct vertices $v_1,v_2,v_3,v_4\in V(C(u,v))$, with $v_1,v_3\in A$ and $v_2,v_4\in B$ such that $v_1,v_2,v_3,v_4$ occur in this order in $C(u,v)$. We make similar definitions for $C[u,v)$, $C(u,v]$, and $C[u,v]$.

\begin{lemma}\label{lem:tonocross}
Let $G$ be a graph, let $e=uv$ be an edge of $G$, and let $H=G-\{u,v\}$. Suppose $C$ is an oriented  cycle of $H$ containing all vertices of $N(\{u,v\})$. Let $f_1,f_2\in \delta(u)\setminus\{e\}$ and $g_1, g_2\in \delta(v)\setminus \{e\}$ be distinct edges, where the ends $u_1,v_1,u_2,v_2$ of $f_1,g_1,f_2,g_2$ on $C$, respectively, occur in the given order.
Then one of the following holds:
\begin{enumerate}
\item there exists a $d\in B_{u,v}(G)$ with $P_{u,v}(d) = \mat{f_1}{f_2}{g_1}{g_2}$;
\item there exist a quad $2$-cycle $q$ and a $d\in B_{u,v}(G)$ with $P_{u,v}(q+d) = \mat{f_1}{f_2}{g_1}{g_2}$; or
\item there are no two disjoint $C$-crossing paths in $H$ with ends in $N(\{u,v\})$.
\end{enumerate}
\end{lemma}
\begin{proof}
Suppose that there are two disjoint $C$-crossing paths in $H$ with ends in $N(\{u,v\})$; that is, there are disjoint paths $P, Q$ in $H$ with ends $r_1,r_2\in N(\{u,v\})$ and $s_1,s_2\in N(\{u,v\})$, respectively, such that $r_1,s_1,r_2,s_2$ occur in $C$ in order.
We may assume that $P$ and $Q$ are internally disjoint from $N(\{u,v\})$, and we assume that $s_1$ is on $C(r_1, r_2)$.

If $r_1, r_2\in N(u)$ and $s_1, s_2\in N(v)$, let $C_1:=ur_1Pr_2u$ and let $D_1:=vs_1Qs_2v$. Orient $C_1$ and $D_1$ such that $P_{u,v}(d_{C_1, D_1}) = \mat{ur_1}{ur_2}{vs_1}{vs_2}.$ Since the vertices $r_1,s_1,r_2,s_2$ occur in the same order as $u_1,v_1,u_2,v_2$ on $C$, by Lemma~\ref{lem:uniqueness}, there exists a $d\in B_{u,v}(G)$ such that $P_{u,v}(d_{C_1, D_1}+d) = \mat{f_1}{f_2}{g_1}{g_2};$ that is, $\mat{f_1}{f_2}{g_1}{g_2}\in P_{u,v}(B_{u,v}(G)).$ In the same way, if  $r_1, r_2\in N(v)$ and $s_1, s_2\in N(u)$, then $\mat{f_1}{f_2}{g_1}{g_2} \in P_{u,v}(B_{u,v}(G)).$ 

Next suppose that $r_1,r_2,s_1, s_2\in N(\{u,v\})\setminus N(v)$. Suppose that at least two of the segments $C(r_1,s_1)$, $C(s_1, r_2)$, $C(r_2, s_2)$, $C(s_2, r_1)$ contain vertices of $N(v)$. We consider the case where $C(r_1,s_1)$ and $C(s_2,r_1)$ contain vertices of $N(v)$; the other cases are similar. Let $v'\in N(v)$ be the nearest vertex to $s_1$ in $C(r_1,s_1]$, and let $v''\in N(v)$ be the nearest vertex  to $s_2$ in $C[s_2,r_1)$. Let 
$C_1 :=ur_1Pr_2u$ and let $D_1 := vv'C[v', s_1]QC[s_2,v'']v''v.$
Orient $C_1$ and $D_1$ such that $d_{C_1,D_1}(ur_1,vv') = 1$. Then 
$$P_{u,v}(d_{C_1, D_1}) = \mat{ur_1}{ur_2}{vv'}{vv''}.$$ 
Since the vertices $r_1,v',r_2,v''$ occur in the same order as $u_1,v_1,u_2,v_2$ on $C$, by Lemma~\ref{lem:uniqueness}, there exists a $d\in B_{u,v}(G)$ such that $P_{u,v}(d) = \mat{f_1}{f_2}{g_1}{g_2}.$

So we may assume that at most one segment contains vertices of $N(v)$. By symmetry, we may assume that $C(r_1, s_1)$ contains vertices of $N(v)$. Let $v'\in N(v)$ be the nearest vertex to $r_1$ in $C[r_1,s_1)$ and let $v''\in N(v)$ be the nearest vertex to $s_1$ in $C(r_1,s_1]$. As $N_H(u)$ and $N_H(v)$ cross in $C$, there exists a vertex $u'\in N_H(u)$ between $v'$ and $v''$ on $C(r_1,s_1)$. Let $K$ be the quad in $$C[s_2, r_2]\cup P\cup Q\cup G[\{v'v, v''v, r_2u, s_2u, u'u, uv\}].$$ 
Let $A$ be the arc in $K$ containing the edge $ur_2$, and let $\ell$ be the end of $A$ unequal to $u$. 
Let $q$ be a quad on $K$ with left side $\{\ell,u'\}$ and $q(r_2u, v'v) = 1$. Then 
$$P_{u,v}(q) = \mat{ur_2}{uu'}{vv'}{vv''}.$$ 
Since the vertices $r_2,v',u',v''$ occur in the same order as $u_1,v_1,u_2,v_2$ on $C$, by Lemma~\ref{lem:uniqueness}, there exists a $d\in B_{u,v}(G)$ such that $P_{u,v}(q + d) = \mat{f_1}{f_2}{g_1}{g_2}.$ The case where $r_1,r_2,s_1, s_2\in N(\{u,v\})\setminus N(u)$ is similar.

Therefore, we may assume that at least one of the paths $P, Q$ has one end in $N(u)$ and the other end in $N(v)$. 
By symmetry, we may assume that $P$ has one end in $N(u)$ and the other end in $N(v)$. We first assume that $Q$ have both ends in $N(u)$ or in $N(v)$.
By symmetry, we may assume that $r_1\in N(v)$ and $r_2\in N(u)$, and that both ends of $Q$ belong to $N(v)$. We may assume that $r_1\not\in N(u)$, for otherwise we are in the first case above.
Suppose $C(s_2,s_1)$ contains a vertex $u'\in N(u)$. If $u'$ is on $C(s_2,r_1)$, let 
$C_1 := uu'C[u', r_1]Pr_2u$ and let $D_1 := vs_1Qs_2v.$
Orient $C_1$ and $D_1$ such that $d_{C_1,D_1}(uu',vs_1) = 1$.
Then $$P_{u,v}(d_{C_1, D_1}) := \mat{uu'}{ur_2}{vs_1}{vs_2} \in P_{u,v}(B_{u,v}(G)).$$
Since the vertices $u',s_1,r_2,s_2$ occur in the same order as $u_1,v_1,u_2,v_2$ on $C$, by Lemma~\ref{lem:uniqueness}, there exists a $d\in B_{u,v}(G)$ such that $P_{u,v}(d) = \mat{f_1}{f_2}{g_1}{g_2}.$
In the same way, if $u'$ is on $C(r_1, s_1)$, then $\mat{f_1}{f_2}{g_1}{g_2}  \in P_{u,v}(B_{u,v}(G)).$ Hence, we may assume that $C(s_2,s_1)$ contains no vertex of $N(u)$. Suppose both $C[s_1, r_2)$ and $C(r_2,s_2]$ contain vertices of $N(u)$. Let $u'\in N(u)$ be the nearest vertex  to $s_1$ on $C[s_1, r_2)$ and let $u''\in N(u)$ be the nearest vertex to $s_2$ on $C(r_2, s_2]$. As $N_H(u)$ and $N_H(v)$ cross in $C$, there exists a vertex $v'\in N_H(v)$ between $u'$ and $u''$. Let $T$ be the path in $C(u',u'')$ from $r_2$ to $v'$.
Let  
$C_1 := uu''C[u'', s_2]QC[s_1, u']u'u$ and let $D_1 := vr_1PTv'v.$ 
Orient $C_1$ and $D_1$ such that $d_{C_1,D_1}(uu'',vr_1) = 1$.
Then 
$$P_{u,v}(d_{C_1, D_1}) = \mat{uu''}{uu'}{vr_1}{vv'} \in B_{u,v}(G).$$ 
Since $u'', r_1,u', v'$ occur in the same order as $u_1,v_1,u_2,v_2$ on $C$, by Lemma~\ref{lem:uniqueness}, there exists a $d\in B_{u,v}(G)$ such that $P_{u,v}(d) = \mat{f_1}{f_2}{g_1}{g_2}.$
Hence at least one of $C[s_1, r_2)$ and $C(r_2,s_2]$ has only vertices from $N(v)$. We may assume that $C(r_2,s_2]$ has only vertices from $N(v)$. Since $N_H(u)$ and $N_H(v)$ cross in $C$, there exist a vertex of $N(v)$ on $C(s_1,r_2)$ and a vertex of $N(u)$ on $C[s_1,r_2)$. Let $v'\in N(v)\setminus \{r_2\}$ nearest to $r_2$ on $C(s_1,r_2]$, and let $u'\in N(u)$ be the nearest vertex to $s_1$ on $C[s_1,v')$. Let $K$ be the quad in $$(C-V(C(s_2, r_1)))\cup P\cup Q\cup G[\{vr_1, vs_2, vv', uv, ur_2, uu'\}].$$ Let $A$ be the arc in $K$ containing the edge $vr_1$, and let $\ell$ be the end of $A$ unequal to $v$. Let $q$ be a quad $2$-cycle on $K$ with left side $\{\ell,u'\}$ and $q(uu', vr_1) = 1$. Then $$P_{u,v}(q) = \mat{ur_2}{uu'}{vv'}{vr_1}.$$ Since $r_2, v', u', r_1$ occur in the same order as $u_1,v_1,u_2,v_2$ on $C$, by Lemma~\ref{lem:uniqueness}, there exists a $d\in B_{u,v}(G)$ such that $P_{u,v}(q+d) = \mat{f_1}{f_2}{g_1}{g_2}.$ 

We may therefore assume that both $P$ and $Q$ have one end in $N(u)$ and the other end in $N(v)$, but both ends do not belong to $N(u)$ or to $N(v)$. We assume that $r_1, s_1\in N(u)$, and $r_2, s_2\in N(v)$, the other cases are similar. Suppose $N_H(u)$ and $N_H(v)$ cross in $C(s_2, r_1)$. Let $v'\in N(v)$ be the nearest vertex to $r_1$ on $C(s_2, r_1)$, and let $u'\in N(u)$ be the nearest vertex to $s_2$ on $C(s_2, r_1)$. Let 
$C_1 := us_1QC[s_2,u']u'u$ and let
$D_1 := vv'C[v',r_1]Pr_2v.$
Orient $C_1$ and $D_1$ so that $d_{C_1, D_1}(uu', vv') = 1$. Then 
$$P_{u,v}(d_{C_1, D_1}) = \mat{uu'}{us_1}{vv'}{vr_2} \in B_{u,v}(G).$$ 
Since the vertices $u',v',s_1,r_2$ occur in the same order as $u_1,v_1,u_2,v_2$ on $C$, by Lemma~\ref{lem:uniqueness}, there exists a $d\in B_{u,v}(G)$ such that
$P_{u,v}(d_{C_1,D_1} + d) = \mat{f_1}{f_2}{g_1}{g_2} \in P_{u,v}(B_{u,v}(G)).$ Hence, we may assume that $N_H(u)$ and $N_H(v)$ do not cross in $C(s_2, r_1)$. In the same way, we may assume that $N_H(u)$ and $N_H(v)$ do not cross in $C(s_1, r_2)$. If $C[r_1, s_1]$ contains a vertex $v'$ of $N(v)$ and $C[r_2, s_2]$ contains a vertex $u'$ of $N(u)$, then $\mat{ur_1}{uu'}{vv'}{vs_2}\in P_{u,v}(B_{u,v}(G))$.  Since the vertices $r_1,v',u',s_2$ occur in the same order as $u_1,v_1,u_2,v_2$ on $C$, by Lemma~\ref{lem:uniqueness}, $\mat{f_1}{f_2}{g_1}{g_2} \in P_{u,v}(B_{u,v}(G)).$

We may therefore assume that  $C[r_1, s_1]$ has no vertices of $N(v)$ or that $C[r_2, s_2]$ has no vertices of $N(u)$; by symmetry, we may assume that $C[r_1, s_1]$ has no vertices of $N(v)$. Since $N_H(u)$ and $N_H(v)$ cross in $C$, $N_H(u)$ and $N_H(v)$ cross in $C[r_2, s_2]$. Let $u'$ be the vertex on $C(r_2, s_2)$ nearest to $r_2$, and let $v'$ be the vertex on $C(u', s_2)$ nearest to $u'$. Let $K$ be the quad in $$(C-V(C(r_1, s_1)))\cup P\cup Q \cup G[\{ur_1, us_1, uu', uv, vr_2, vv'\}].$$ Let $A$ be the arc in $K$ containing the edge $ur_1$, and let $\ell$ be the end of $A$ unequal to $u$. Let $q$ be a quad $2$-cycle on $K$ with left side $\{\ell, u'\}$ and $q(ur_1, vr_2) = 1$. Then $$P_{u,v}(q) = \mat{ur_1}{uu'}{vr_2}{vv'}.$$ Since $r_1, r_2, u', v'$ occur in the same order as $u_1,v_1,u_2,v_2$ on $C$, by Lemma~\ref{lem:uniqueness}, there exists a $d\in B_{u,v}(G)$ such that $P_{u,v}(q+d) = \mat{f_1}{f_2}{g_1}{g_2}.$
\end{proof}

From Lemmas~\ref{lem:tocycle} and~\ref{lem:tonocross}, we obtain the following lemma.
\begin{lemma}\label{lem:cyclenodisjoint}
Let $e = uv$ be an edge of a $3$-connected graph $G$ such that $G/e$ is $3$-connected. Suppose $N(\{u,v\})$ has at least four vertices. Then one of the following holds:
\begin{enumerate}
\item for any $2$-cycle $d$, there exist a $d_1\in B_{u,v}(G)$, a $d_2\in B_{v,u}(G)$, and quad $2$-cycles $q_i$ on $G$, $i=1,\ldots,k$, such that $z = d-d_1-d_2-\sum_{i=1}^k q_i$ satisfies $P_{u,v}(z) = 0$ and $P_{v,u}(z) = 0$; or
\item there exists an oriented cycle $C$ in $H:=G-\{u,v\}$ containing all vertices of $N(\{u,v\})$, and  $N_H(u)$ and $N_H(v)$ cross in $C$, and there are no two disjoint $C$-crossing paths in $H$ with ends in $N(\{u,v\})$.
\end{enumerate}
\end{lemma}

\section{No tripod}

Let $v_1,v_2,v_3$ be distinct vertices of a graph $G$. A \emph{tripod} on $v_1,v_2,v_3$ is a subgraph $P_1\cup R_1\cup P_2\cup R_2\cup P_3\cup R_3\cup Q_1\cup Q_2\cup Q_3$ of $G$ 
consisting of
\begin{enumerate}
\item two vertices $a,b$ so that $a,b,v_1,v_2,v_3$ are all distinct;
\item three paths $P_1\cup R_1$, $P_2\cup R_2$, $P_3\cup R_3$ of $G$ between $a$ and $b$, mutually internally disjoint, each at least with one internal vertex;
\item  three paths $Q_1,Q_2,Q_3$ of $G$, mutually disjoint, such that for $i=1,2,3$, $Q_i$ has ends $u_i$ and $v_i$, where $u_i\in V(P_i\cup R_i)\setminus \{a,b\}$, and no vertex of $Q_i$ except for $u_i$ belongs to $V(P_1\cup R_1\cup P_2\cup R_2\cup P_3\cup R_3)$.
\end{enumerate}
The paths $Q_1,Q_2,Q_3$ are called the \emph{legs} of the tripod, the vertices $v_1,v_2,v_3$ the \emph{feet}, and the vertices $a, b$ the \emph{midpoints}.

Let $e=uv$ be an edge of a $3$-connected graph $G=(V,E)$ such that $G/e$ is $3$-connected. Let $H := G-\{u,v\}$. Suppose $C$ is a cycle of $H$ containing all vertices of $N(\{u,v\})$. If there are no two disjoint $C$-crossing paths in $H$ with ends in $N(\{u,v\})$, and there is no tripod in $H$ with feet in $N(\{u,v\})$, we use the following theorem; see \cite{RobertsonST93}.

\begin{theorem}\label{thm:disjointpaths}
Let $v_1,\ldots,v_k$ be distinct vertices of a graph $G$. If there is no $(\leq 2)$-separation $(G_1,G_2)$ with $v_1,\ldots,v_k\in V(G_1)$ and $V(G_1)\not=V(G)$, then at least of the following holds:
\begin{enumerate}
\item there are disjoint paths of $G$ with ends $p_1,p_2$ and $q_1,q_2$, respectively, such that $p_1,q_1,p_2,q_2$ occur in the sequence $v_1,\ldots,v_k$ in order,
\item there exists a tripod with feet in $\{v_1,\ldots,v_k\}$, or
\item $G$ can be drawn in a disc with $v_1,\ldots,v_k$ on the boundary in order.
\end{enumerate}
\end{theorem}

\begin{lemma}\label{lem:4notripod}
Let $e=uv$ be an edge of a $3$-connected graph $G=(V,E)$ such that $G/e$ is $3$-connected. Suppose $H := G-\{u,v\}$ has an oriented cycle $C$ containing all vertices of $N(\{u,v\})$ such that $N_H(u)$ and $N_H(v)$ cross in $C$.
Suppose there are no two disjoint $C$-crossing paths in $H$ with ends in $N(\{u,v\})$,
%
and there is no tripod in $H$ with feet in $N(\{u,v\})$. Then,
for any $2$-cycle $d$ on $G$, there exist a $d_1\in B_{u,v}(G)$, a $d_2\in B_{v,u}(G)$, a Kuratowski $2$-cycle $d_K$ on a $K_{3,3}$-subdivision $K$ in $G$, and an integer $\alpha$ such that $z = d-d_1-d_2- \alpha d_K$ satisfies $P_{u,v}(z) = 0$ and $P_{v,u}(z) = 0$.
\end{lemma}
\begin{proof}
Let $d$ be a $2$-cycle on $G$.
Since $N_H(u)$ and $N_H(v)$ cross in $C$, there exists a $K_{3,3}$-subdivision $K$
in $G[\delta(u)\cup \delta(v)\cup E(C)]$ in which $e$ is an arc.

 As $G/e$ is $3$-connected, there is no $(\leq 2)$-separation $(H_1, H_2)$ of $H$ with $N(\{u,v\})\subseteq V(H_1)$ and $V(H_1)\not=V(H)$. As there are no disjoint paths in $H$ with ends $p_1,p_2\in N(\{u,v\})$ and $q_1,q_2\in N(\{u,v\})$, respectively, such that $p_1,q_1,p_2,q_2$ occur in $C$ in order, and $H$ has no tripod with feet in $N(\{u,v\})$, $H$ can be embedded in the unit disc $D$ with the vertices of $N(\{u,v\})$ on the boundary in the order induced by $C$, by Theorem~\ref{thm:disjointpaths}. Extend the embedding of $H$ in $D$ to a drawing $\phi$ of $G$ in the plane by mapping the vertices $u$ and $v$, and the interiors of the edges incident with $u$ and $v$ in generic position in the complement of $D$. This can be done such that $K$ has only two edges $h_1,h_2$ that intersect; we assume that $h_1\in \delta(u)$ and $h_2\in \delta(v)$. 
  
We can choose a Kuratowski $2$-cycle $d_K$ on $K$ such that $d_K(h_1,h_2) = 1$.
By Lemma~\ref{lem:oncycle}, there exist a $d_1\in B_{u,v}(H_1)$ and a $d_2\in B_{v,u}(H_1)$ such that, if $d' =  d-d_1-d_2$, then 
\begin{align*}
P_{u,v}(d') &= \alpha P_{u,v}(d_K) \text{ for some integer $\alpha$,}\\
P_{v,u}(d') &= \beta P_{v,u}(d_K) \text{ for some integer $\beta$.}
\end{align*}
As the restriction of the drawing $\phi$ to $H$ is an embedding of $H$ in $D$, $\kr_{\phi}(d'_{\restriction E(H)^2})=0$.  Let $H' := G[\delta(u)\cup\delta(v)]$. Since $\kr_{\phi}(d) = 0$ for any $2$-cycle $d$ on $G$, $\kr_{\phi}(d'_{\restriction E(H')^2}) = 0$. 
Hence $$\kr_{\phi}(\alpha P_{u,v}(d_K) + \beta P_{v,u}(d_K)) = 0,$$ that is, $$\alpha \kr_{\phi}(P_{u,v}(d_K))+ \beta  \kr_{\phi}(P_{v,u}(d_K)) = 0.$$ Since $\kr_{\phi}(P_{u,v}(d_K)) = \kr_{\phi}(h_1,h_2)$, $\kr_{\phi}(P_{v,u}(d_K)) = \kr_{\phi}(h_2,h_1)$, and $\kr_{\phi}(h_1,h_2) = -\kr_{\phi}(h_2,h_1)$, we obtain that $\alpha=\beta$. Hence $z = d-d_1-d_2-\alpha d_K$ satisfies $P_{u,v}(z) = 0$ and $P_{v,u}(z) = 0$.
\end{proof}

\begin{lemma}\label{lem:3notripod}
Let $e=uv$ be an edge of a $3$-connected graph $G=(V,E)$ such that $G/e$ is $3$-connected. Suppose $N(\{u,v\})$ has exactly three vertices and $H:=G-\{u,v\}$ has no tripod with feet in $N(\{u,v\})$. Then, for any $2$-cycle $d$ on $G$, there exist a Kuratowski $2$-cycle $d_K$ on a Kuratowski subgraph $K$ in $G$ and an integer $\alpha$ such that $z = d- \alpha d_K$ satisfies $P_{u,v}(z) = 0$ and $P_{v,u}(z) = 0$..
\end{lemma}
\begin{proof}
Let $d$ be a $2$-cycle on $G$. As $N(\{u,v\})$ has exactly three vertices, $P_{u,v}(d) = \alpha[f_1,f_2,f_3; g_1, g_2, g_3]$ for some integer $\alpha$, where $f_1,f_2,f_3\in \delta(u)\setminus\{e\}$ and $g_1,g_2,g_3\in \delta(v)\setminus\{e\}$.  Let $u_1,u_2,u_3$ be the three vertices in $N(\{u,v\})$. 

As $G/e$ is $3$-connected, there is no $(\leq 2)$-separation $(H_1, H_2)$ of $H$ with $N(\{u,v\})\subseteq V(H_1)$ and $V(H_1)\not=V(H)$. By Theorem~\ref{thm:disjointpaths}, $H$ can be embedded in a disc $D$ with the vertices of $N(\{u,v\})$ on the boundary. Then $\kr_{\phi}(d_{\restriction E(H)^2}) = 0$. Let $H' := G[\delta(u)\cup\delta(v)]$. Extend the embedding of $H$ in $D$ to a drawing $\phi$ of $G$ in the plane by mapping the vertices $u$ and $v$, and the interiors of the edges incident with $u$ and $v$ in generic position in the complement of $D$. This can be done such that $H'$ has only two nonadjacent edges $h_1,h_2$ that intersect; we assume that $h_1\in \delta(u)$ and $h_2\in \delta(v)$.
Then $\kr_{\phi}(d_{\restriction E(H')^2}) = 0$, by Lemma~\ref{lem:crzero}. 

Suppose $H-N(\{u,v\})$ contains a vertex $z$. 
As there is no $(\leq 2)$-separation $(H_1, H_2)$ of $H$ with $N(\{u,v\})\subseteq V(H_1)$ and $V(H_1)\not=V(H)$, there exist three internally disjoint paths $P_1, P_2, P_3$ starting at $u_1,u_2,u_3$, respectively, and ending at $z$. Then $$K := G[\delta(u)\setminus\{e\})\cup (\delta(v)\setminus\{e\})\cup E(P_1)\cup E(P_2)\cup E(P_3)]$$ is a subdivision of $K_{3,3}$.
Since $h_1$ and $h_2$ belong
to disjoint subdivided edges of $K_{3,3}$, we can choose the
Kuratowski $2$-cycle $d_K$ on $K$ such that $d_K(h_1,h_2) = 1$. Then 
\begin{align*}
P_{u,v}(d) &= \alpha P_{u,v}(d_K) \text{ for some integer $\alpha$,}\\
P_{v,u}(d) &= \beta P_{v,u}(d_K) \text{ for some integer $\beta$.}
\end{align*}
Since $\kr_{\phi}(d_{\restriction E(H')^2}) = \alpha-\beta$ and $\kr_{\phi}(d_{\restriction E(H')^2}) = 0$, $\alpha=\beta$.  So $z = d-\alpha d_K$ satisfies $P_{u,v}(z) = 0$ and $P_{v,u}(z) = 0$.

If $H-\{u_1,u_2,u_3\}$ contains no vertex, then, by the $2$-connectivity of $H$, $H$ is a triangle on $\{u_1,u_2,u_3\}$. Then $$K := G[\delta(u)\cup \delta(v)\cup \{u_1u_2, u_2u_3, u_3u_1\}]$$ is a $K_5$. Since $h_1$ and $h_2$ belong
to nonadjacent edges of $K_{5}$, we can choose a
Kuratowski $2$-cycle $d_K$ on $K$ such that $d_K(h_1,h_2) = 1$. 
Then 
\begin{align*}
P_{u,v}(d) &= \alpha P_{u,v}(d_K) \text{ for some integer $\alpha$,}\\
P_{v,u}(d) &= \beta P_{v,u}(d_K) \text{ for some integer $\beta$.}
\end{align*}
Since $\kr_{\phi}(d_{\restriction E(H')^2}) = 0$, $\alpha=\beta$.  So $z = d-\alpha d_K$ satisfies $P_{u,v}(z) = 0$ and $P_{v,u}(z) = 0$.
\end{proof}

\section{Using a tripod}

In this section, we study the case where $H$ has a tripod with feet in $N(\{u,v\})$.

\begin{lemma}
Let $v_1,\ldots,v_k$ be distinct vertices of a graph $G$. Suppose $G$ has a tripod with feet in $\{v_1,\ldots,v_k\}$. If there are no two disjoint paths of $G$ with ends $p_1,p_2$ and $q_1,q_2$, respectively, such that $p_1,q_1,p_2,q_2$ occur in the sequence $v_1,\ldots,v_k$ in order, then there is a $3$-separation $(G_1, G_2)$ of $G$ with $\{v_1,\ldots,v_k\}\subseteq V(G_2)$ such that $G_1$ has a tripod with feet in $V(G_1\cap G_2)$, and there exist three disjoint paths from $V(G_1\cap G_2)$ to $\{v_1,\ldots,v_k\}$.
\end{lemma}

\begin{lemma}\label{lem:2tripods}
Let $e = uv$ be an edge of a $3$-connected graph $G$ such that $G/e$ is $3$-connected.  Suppose $H := G-\{u,v\}$ has an oriented cycle $C$ containing all vertices of $N(\{u,v\})$ such that $N_H(u)$ and $N_H(v)$ cross in $C$. 
If there are no two disjoint $C$-crossing paths in $H$ with ends in $N(\{u,v\})$, but there is a tripod in $H$ with feet in $N(\{u,v\})$, then there exist a tripod $TP_1$ in $H$ with feet, $p_1,p_2,p_3$, in $V(C)$ and a tripod $TP_2$ in $H' := G[\delta(u)\cup \delta(v)\cup E(C)]$ with feet $p_1,p_2,p_3$ such that $V(TP_1\cap TP_2)=\{p_1,p_2,p_3\}$.
\end{lemma}
\begin{proof}
Since there are no disjoint paths in $H$ with ends $r_1,r_2\in N(\{u,v\})$ and $s_1,s_2\in N(\{u,v\})$, respectively, such that $r_1,s_1,r_2,s_2$ occur in $C$ in order, but $H$ has a tripod with feet in $N(\{u,v\})$, there exists a $3$-separation $(H_1,H_2)$ of $H$ with $N(\{u,v\})\subseteq V(H_2)$ such that $H_1$ has a tripod $TP'$ with feet $V(H_1\cap H_2)$, and there are three disjoint paths $P_1,P_2,P_3$ from $V(H_1\cap H_2)$ to $N(\{u,v\})$. For $i=1,2,3$, let $p_i$ be the first vertex of $P_i$ on $C$ when going from the end in $V(H_1\cap H_2)$ to $N(\{u,v\})$. Then $TP_1 := TP'\cup P_1\cup P_2\cup P_3$ is a tripod of $H$ with feet $p_1,p_2,p_3$. We may assume that $p_1,p_2,p_3$ appear in this order on $C$.

Suppose first that $V(H_1)\setminus V(H_2)$ contains no vertices of $C$. We may assume that $C\subseteq H_2$. Observe that $C(p_1,p_3)$ contain at least one vertex of $N_H(\{u,v\})$, as $P_2$ ends in $N(\{u,v\})$. In the same way $C(p_2,p_1)$ and $C(p_3, p_2)$ each contain at least one vertex of $N_H(\{u,v\})$.  Let $F'$ be obtained from $H'$ by suppressing any vertex of degree two that is not in $\{p_1,p_2,p_3\}$. If $F'$ has a tripod with feet $p_1,p_2,p_3$, then $H'$ has one.  Any vertex of $V(F')\setminus \{p_1,p_2,p_3,u,v\}$ belongs to $N(\{u,v\})$. Suppose now that there is a $(\leq 2)$-separation $(F_1, F_2)$ of $F'$ with $\{p_1,p_2,p_3\}\subseteq V(F_2)$ and $V(F_2)\not=V(F')$. Let $s\in V(F')\setminus V(F_2)$. If $s\not\in \{u,v\}$, then $s\in N(\{u,v\})$, and there are three internally disjoint paths from $s$ to $p_1,p_2,p_3$, as each of $C(p_1,p_3)$, $C(p_2,p_1)$, and $C(p_3,p_2)$ contains a vertex of $N(\{u,v\})$; a contradiction. The case $s\in \{u,v\}$ is analogous. If $F'$ can be embedded in a disc with $p_1,p_2,p_3$ on the boundary, then the graph $H'$ can be embedded in the plane; a contradiction. Hence $F'$ has a tripod with feet $p_1,p_2,p_3$ by Theorem~\ref{thm:disjointpaths}, and so does $H'$ have a tripod $TP_2$ with feet $p_1,p_2,p_3$. 

Suppose next that $V(H_1)\setminus V(H_2)$ contains vertices of $C$. There are vertices $t_1,t_2\in V(C)$ such that $V(C[t_1,t_2])$ contains at least three vertices, $C[t_1,t_2]\subseteq H_1$, and any $t\in V(C(t_2,t_1))$ does not belong to $V(H_1)$. Then $t_1,t_2\in \{p_1,p_2,p_3\}$. 

Suppose first that $C(t_1,t_2)$ contains no vertices of $\{p_1,p_2,p_3\}$. By symmetry, we may assume that $t_1 = p_1$ and $t_2 = p_2$. 
Consider now the graph $F = G[E(C[p_2,p_1])\cup \delta(u)\cup \delta(v)]$. We prove that $F$ has a tripod with feet $\{p_1,p_2,p_3\}$. Let $F'$ be obtained from $F$ by suppressing any vertex of degree two that is not in $\{p_1,p_2,p_3\}$. If $F'$ has a tripod with feet $p_1,p_2,p_3$, then $F$ has one.  Any vertex of $V(F')\setminus \{p_1,p_2,p_3,u,v\}$ belongs to $N(\{u,v\})$. Observe that each of $C[p_2,p_3)$, $C(p_2,p_1)$, and $C(p_3,p_1]$ contains at least one vertex of $N(\{u,v\})$. Suppose to the contrary that there is a $(\leq 2)$-separation $(F_1,F_2)$ of $F'$ with $p_1,p_2,p_3\in V(F_2)$ and $V(F_2)\not=V(F')$. Let $s\in V(F')\setminus V(F_2)$.  If $s\not\in \{u,v\}$, then $s\in N(\{u,v\})$, and there are three internally disjoint paths from $s$ to $p_1,p_2,p_3$; a contradiction. The case $s\in \{u,v\}$ is analogous. If $F'$ can be embedded in a disc with $p_1,p_2,p_3$ on the boundary, then the graph $G[E(C)\cup \delta(u)\cup \delta(v)]$ can be embedded in the plane; a contradiction. Hence $F'$ has a tripod with feet $p_1,p_2,p_3$ by Theorem~\ref{thm:disjointpaths}, and so does $F$ have a tripod $TP_2$ with feet $p_1,p_2,p_3$. 

Suppose next that $C(t_1,t_2)$ contains a vertices of $\{p_1,p_2,p_3\}$. By symmetry, we may assume that $t_1 = p_1$ and $t_2 = p_3$. Then $p_2\in N(\{u,v\})$.
Consider now the graph $F = G[E(C[p_3,p_1])\cup \delta(u)\cup \delta(v)]$. We prove that $F$ has a tripod with feet $\{p_1,p_2,p_3\}$. Let $F'$ be obtained from $F$ by suppressing any vertex of degree two that is not in $\{p_1,p_2,p_3\}$. If $F'$ has a tripod with feet $p_1,p_2,p_3$, then $F$ has one.  Any vertex of $V(F')\setminus \{p_1,p_2,p_3,u,v\}$ belongs to $N(\{u,v\})$. Suppose that there is a $(\leq 2)$-separation $(F_1,F_2)$ of $F'$ with $p_1,p_2,p_3\in V(F_2)$ and $V(F_2)\not=V(F')$. Let $s\in V(F')\setminus V(F_2)$. If $s\not\in \{u,v\}$, then $s\in N(\{u,v\})$, and there are three internally disjoint paths from $s$ to $p_1,p_2,p_3$; a contradiction. The case where $s\in\{u,v\}$ is analogous. If $F'$ can be embedded in a disc with $p_1,p_2,p_3$ on the boundary, then the graph $G[E(C)\cup \delta(u)\cup \delta(v)]$ can be embedded in the plane; a contradiction. Hence $F'$ has a tripod with feet $p_1,p_2,p_3$ by Theorem~\ref{thm:disjointpaths}, and so does $F$ have a tripod $TP_2$ with feet $p_1,p_2,p_3$. 
\end{proof}

\begin{lemma}
Let $TP_1$ and $TP_2$ be tripods that have only their feet in common, and let $a$ and $b$ be midpoints of $TP_1$ and $TP_2$, respectively. 
Let $TP_1$ be drawn in a disc $D_1$ with the feet of $TP_1$ on the boundary of $D_1$. Let $q$ be a quad $2$-cycle on $TP_1\cup TP_2$ with left side $\{a,b\}$. Then $\kr(q_{\restriction E(TP_1)^2}) \in \{-1, 1\}$.
\end{lemma}

\begin{lemma}\label{lem:4hastripod}
Let $e = uv$ be an edge of a $3$-connected graph $G$ such that $G/e$ is $3$-connected.  Suppose $H := G-\{u,v\}$ has an oriented cycle $C$ containing all vertices of $N(\{u,v\})$, and $N_H(u)$ and $N_H(v)$ cross in $C$. If there are no two disjoint $C$-crossing paths in $H$ with ends in $N(\{u,v\})$, but there is a tripod in $H$ with feet in $N(\{u,v\})$, then, for any $2$-cycle $d$ on $G$, there exist a $d_1\in B_{u,v}(G)$, a $d_2\in B_{v,u}(G)$, a quad $2$-cycle $q$, an integer $\alpha$, a Kuratowski $2$-cycle $d_K$ on a $K_{3,3}$-subdivision $K$ in $G$, and an integer $\beta$, such that $z = d - d_1-d_2 - \alpha q - \beta d_K$ satisfies $P_{u,v}(z) = 0$ and $P_{v,u}(z) = 0$..
\end{lemma}
\begin{proof}
Let $H' = G[\delta(u)\cup \delta(v)\cup E(C)]$. Let $K$ be a subdivision of $K_{3,3}$ in $H'$ in which $e$ is an arc.

By the previous lemma, there exist a tripod $TP_1$ in $H$ with feet, $p_1,p_2,p_3$, in $V(C)$ and a tripod $TP_2$  in $G[\delta(u)\cup \delta(v)\cup E(C)]$ with feet $p_1,p_2,p_3$ such that $V(TP_1\cap TP_2) = \{p_1,p_2,p_3\}$.
Then $TP_1\cup TP_2$ is a quad $Q$ in $G$. Choose midpoints $u_1,u_2$ in $TP_1$ and $TP_2$, respectively. Let $q$ be a quad $2$-cycle on $Q$ with left side $\{u_1,u_2\}$.

Let $\phi$ be a drawing of $G$ in the plane that is in generic position such that $H$ is drawn in a disc $D$ with the vertices of $N(\{u,v\})$ on the boundary of $D$, and $H_1$ is drawn outside $D$; we may assume that the $K_{3,3}$-subdivision $K$ intersects in two nonadjacent edges $h_1$ and $h_2$ only. Let $d' = d - \kr_{\phi}(d_{\restriction E(H_1)^2})\kr_{\phi}(q_{\restriction E(H_1)^2})q$. Then $\kr_{\phi}(d'_{\restriction E(H_1)^2}) = 0$.

By Lemma~\ref{lem:oncycle}, there exist a $d_1\in B_{u,v}(H_1)$, a $d_2\in B_{v,u}(H_1)$, and integers $\alpha$ and $\beta$ such that 
\begin{align*}
P_{u,v}(d'-d_1-d_2) &= \alpha P_{u,v}(d_K),\\ 
P_{v,u}(d'-d_1-d_2) &= \beta P_{v,u}(d_K). 
\end{align*}
Let $z = d'-d_1-d_2$.

As $\kr_{\phi}(d'_{\restriction E(H_1)^2})=0$, $\kr_{\phi}(z_{\restriction E(H_1)^2}) = 0$. 
Hence $\kr_{\phi}(\alpha P_{u,v}(d_K) + \beta P_{v,u}(d_K)) = 0$, that is, $\alpha \kr_{\phi}(P_{u,v}(d_K))+ \beta  \kr_{\phi}(P_{v,u}(d_K)) = 0$. Since $\kr_{\phi}(P_{u,v}(d_K)) = \kr_{\phi}(h_1,h_2)$, $\kr_{\phi}(P_{v,u}(d_K)) = \kr_{\phi}(h_2,h_1)$, and $\kr_{\phi}(h_1,h_2) = -\kr_{\phi}(h_2,h_1)$, we obtain that $\alpha=\beta$. Hence $z' = z - \alpha d_K$ satisfies $P_{u,v}(z') = 0$ and $P_{v,u}(z') = 0$.
\end{proof}

\begin{lemma}\label{lem:3hastripod}
Let $G=(V,E)$ be a $3$-connected graph and let $e=uv$ be an edge of $G$ such that $G/e$ is $3$-connected. Suppose $N(\{u,v\})$ has exactly three vertices. If there is a tripod in $H$ with feet in $N(\{u,v\})$, then, for any $2$-cycle $d$ on $G$, there exist a quad $2$-cycle $q$, an integer $\alpha$, a Kuratowski $2$-cycle $d_K$ on a $K_{3,3}$-subdivision $K$ in $G$, and an integer $\beta$, such that $z = d - \alpha q - \beta d_K$ satisfies $P_{u,v}(z) = 0$ and $P_{v,u}(z) = 0$.
\end{lemma}
\begin{proof}
Let $\{u_1,u_2,u_3\} = N(\{u,v\})$. Let $TP_1$ be a tripod of $H$ with feet $u_1,u_2,u_3$. Let $H_1 = G[\delta(u)\cup \delta(v)]$ and let $TP_2 = H_1\setminus \{e\}$. Then $TP_1\cup TP_2$ is a quad $Q$. Let $q$ be a quad $2$-cycle on $Q$ with left side $\{u,a\}$, where $a$ is a midpoint of $TP_1$. Let $\phi$ be a drawing of $G$ in the plane that is in generic position such that $H$ is drawn in a disc $D$ with $u_1,u_2,u_3$ on the boundary of $D$, and $H_1$ is drawn outside $D$; we may assume that $H_1$ intersects in two nonadjacent edges $h_1$ and $h_2$ only. Replacing $d$ by $d - \kr_{\phi}(d_{\restriction E(H_1)^2})\kr_{\phi}(q_{\restriction E(H_1)^2})q$ gives a $2$-cycle with $\kr_{\phi}(d_{\restriction E(H_1)^2}) = 0$.

Let $P_1,P_2,P_3$ three internally disjoint paths in $H$ from $u_1,u_2,u_3$ to $a$, respectively, and let $K = G[E(TP_2)\cup E(P_1)\cup E(P_2)\cup E(P_3)]$. Then $K$ is a $K_{3,3}$-subdivision in $G$. Let $d_K$ be a Kuratowski $2$-cycle on $K$ with $d_K(h_1,h_2)=1$. Then 
\begin{align*}
P_{u,v}(d) &= \alpha P_{u,v}(d_K) \text{ for some integer $\alpha$,}\\
P_{v,u}(d) &= \beta P_{v,u}(d_K) \text{ for some integer $\beta$.}
\end{align*}
Since $\kr_{\phi}(d_{\restriction E(H_1)^2}) = \alpha \kr_{\phi}(h_1,h_2) + \beta \kr_{\phi}(h_2,h_1)$ and $\kr_{\phi}(d_{\restriction E(H_1)^2}) = 0$, $\alpha=\beta$.  So $z = d-\alpha d_K$ satisfies $P_{u,v}(z) = 0$ and $P_{v,u}(z) = 0$.
\end{proof}

\section{Decomposing $2$-cycles on a graph}

In the proof of the main theorem, we will use the following lemma.
\begin{lemma} \label{lem:3connected}
Let $G=(V,E)$ be a $3$-connected graph with $|V| > 4$. Then $G$ has an
edge $e$ such that $G/e$ is $3$-connected.
\end{lemma}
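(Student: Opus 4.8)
The plan is to prove this classical result of Tutte by contradiction, using an extremal (smallest-fragment) argument. Suppose that for every edge $e$ of $G$, the graph $G/e$ fails to be $3$-connected, and I will derive a contradiction.

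First I would record the standard consequence of this assumption: for every edge $e=xy$ there is a vertex $z\notin\{x,y\}$ such that $\{x,y,z\}$ is a separator of $G$. Indeed, since $G$ is $3$-connected, $G/xy$ is at least $2$-connected, so any separator of $G/xy$ has exactly two vertices; this separator must contain the contracted vertex $v_{xy}$ (otherwise it would already separate $G$, contradicting $3$-connectivity), and its second vertex is the desired $z$. Next, among all triples consisting of an edge $xy$, a vertex $z$ with $\{x,y,z\}$ a separator, and a component $C$ of $G-\{x,y,z\}$, I would choose one for which $|V(C)|$ is minimum. Since $G$ is $3$-connected, $z$ must have a neighbour $v$ in $C$ (otherwise $\{x,y\}$ alone would separate $C$ from $z$). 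Now I apply the assumption to the edge $zv$: there is a vertex $w$ such that $\{z,v,w\}$ separates $G$.

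The crux of the argument, and the step I expect to require the most care, is to produce from $\{z,v,w\}$ a component $D$ of $G-\{z,v,w\}$ with $V(D)\subsetneq V(C)$, which contradicts the minimality of $C$. The key observation is that every vertex of $C$ has all its neighbours inside $V(C)\cup\{x,y,z\}$, so after deleting $z$ and $v$ the set $V(C)-\{v\}$ can reach the rest of $G$ only through $x$ and $y$. When $w\notin\{x,y\}$, the adjacent vertices $x,y$ lie in a common component of $G-\{z,v,w\}$ together with everything outside $C$ (those vertices attach to the surviving separator vertices $x,y$), so any other component $D$ satisfies $V(D)\subseteq V(C)-\{v\}$. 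The case $w\in\{x,y\}$, say $w=x$, is handled by the same bookkeeping: in $G-\{z,v,x\}$ every vertex outside $C$ reaches $y$, so the component of $G-\{z,v,x\}$ that avoids $y$ again lies in $V(C)-\{v\}$. In either case $|V(D)|<|V(C)|$, contradicting the choice of $C$.

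The remaining points are routine and I would only verify them briefly: that each separator genuinely has nonempty sides (so that it really is a separator and the chosen component $C$ exists), which follows from $|V|>4$ together with $3$-connectivity, and that the second component $D$ produced in the final step is nonempty because $\{z,v,w\}$ is a separator. Assembling these, the extremal choice is contradicted, so some edge $e$ with $G/e$ $3$-connected must exist.
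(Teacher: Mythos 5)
The paper does not prove this lemma at all; it simply cites Diestel, and your argument is precisely the classical Tutte proof reproduced there (extremal choice of a smallest component $C$ attached to a separator $\{x,y,z\}$, then a second separator $\{z,v,w\}$ through a neighbour $v$ of $z$ in $C$). Your proof is correct; the only place I would tighten is the claim that every vertex outside $C$ lands in the component of $G-\{z,v,w\}$ containing $x$ and $y$ --- if $w$ lies in another component $C'$ of $G-\{x,y,z\}$, you need the small extra observation that no piece of $C'-w$ can have all its neighbours in $\{z,w\}$ (else $\{z,w\}$ would be a $2$-separator). Diestel's route avoids this entirely: since each vertex of the separator $\{z,v,w\}$ has a neighbour in every component of $G-\{z,v,w\}$, the vertex $v$ has a neighbour in $D$, that neighbour lies in $C$ because $N(v)\subseteq V(C)\cup\{x,y,z\}$ and $D$ avoids $x,y,z$, and connectedness of $D$ then forces $D\subseteq V(C)-\{v\}$ directly.
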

A proof of this lemma can be found in \cite{Diestel}.

\begin{theorem}\label{thm:main}
Any $2$-cycle on a graph $G=(V,E)$ is a sum of cycle-pair $2$-cycles, quad $2$-cycles, and Kuratowski $2$-cycles.
\end{theorem}
\begin{proof}
We show this by induction on the number of vertices of $G$. By
Lemma~\ref{lem:1sep} and Lemma~\ref{lem:2sep}, we may assume that $G$
is $3$-connected. The case where $|V|=4$ is easy.

Let $d$ be a $2$-cycle on $G$. We show
that there exist cycle-pair $2$-cycles $d_{C_i,D_i}$, $i=1,\ldots,k$, quad $2$-cycles $q_i$, $i=1,\ldots,\ell$, and Kuratowski
$2$-cycles $d_{H_i}$, $i=1,\ldots,m$ such that
\begin{equation} \label{eq:subtract}
d' = d - \sum_{i=1}^k d_{C_i,D_i} - \sum_{i=1}^\ell q_i - \sum_{i=1}^m d_{H_i}
\end{equation}
satisfies $P_{u,v}(d') = 0$ and $P_{v,u}(d') = 0$.
Once we have shown this, $d'/e$ is a
$2$-cycle on $G/e$. By induction on the number of vertices, $d'/e$ has the required form.
Lemma~\ref{lem:uncontract} then shows that $d'$ belongs to the
group spanned by all $d_{C,D}$, with $C$ and $D$ disjoint oriented cycles of
$G$, all quad $2$-cycles, and all Kuratowski $2$-cycles. Hence, with
\eqref{eq:subtract}, we have shown the theorem.

By Lemma~\ref{lem:3connected}, there exists
an edge $e=uv$ of $G$ such that $G/e$ is $3$-connected. 

Suppose first $N(\{u,v\})$ has at least four vertices. If there exist a $d_1\in B_{u,v}(G)$,  a $d_2\in B_{v,u}(G)$, and quad $2$-cycles $q_i$, $i=1,\ldots,k$, such that $z = d-d_1-d_2-\sum_{i=1}^k q_i$ satisfies $P_{u,v}(z) = 0$ and $P_{v,u}(z) = 0$, then we are done.  If not, then, by Lemma~\ref{lem:cyclenodisjoint}, there exists an oriented cycle $C$ in $H:=G-\{u,v\}$ containing all vertices of $N(\{u,v\})$, and  $N_H(u)$ and $N_H(v)$ cross in $C$, and there are no disjoint paths in $H$ with ends $r_1,r_2\in N(\{u,v\})$ and $s_1,s_2\in N(\{u,v\})$, respectively, such that $r_1,s_1,r_2,s_2$ occur in $C$ in order.
If $H$ has no tripod $TP_1$ with feet in $N(\{u,v\})$, then, by Lemma~\ref{lem:4notripod}, there exist a $d_1\in B_{u,v}(G)$, a $d_2\in B_{v,u}(G)$, a Kuratowski $2$-cycle $d_K$ on a $K_{3,3}$-subdivision $K$ in $G$, and an integer $\alpha$ such that $z = d-d_1-d_2- \alpha d_K$ satisfies $P_{u,v}(z) = 0$ and $P_{v,u}(z) = 0$.
If $H$ has a tripod with feet in $N(\{u,v\})$, then, by Lemma~\ref{lem:4hastripod}, there exist a $d_1\in B_{u,v}(G)$, a $d_2\in B_{v,u}(G)$, a quad $2$-cycle $q$, an integer $\alpha$, a Kuratowski $2$-cycle $d_K$ on a $K_{3,3}$-subdivision $K$ in $G$, and an integer $\beta$, such that $z = d - d_1-d_2 - \alpha q - \beta d_K$ satisfies $P_{u,v}(z) = 0$ and $P_{v,u}(z) = 0$.

Suppose next that $N(\{u,v\})$ has exactly three vertices. If there is no tripod in $H$ with feet in $N(\{u,v\})$, then, by Lemma~\ref{lem:3notripod}, there exist a Kuratowski $2$-cycle $d_K$ on a Kuratowski subgraph $K$ in $G$, and an integer $\alpha$ such that $z = d- \alpha d_K$ satisfies $P_{u,v}(z) = 0$ and $P_{v,u}(z) = 0$. If there is a tripod in $H$ with feet in $N(\{u,v\})$, then, by Lemma~\ref{lem:3hastripod}, there exist a quad $2$-cycle $q$, an integer $\alpha$, a Kuratowski $2$-cycle $d_K$ on a $K_{3,3}$-subdivision $K$ in $G$, and an integer $\beta$, such that $z = d - \alpha q - \beta d_K$ satisfies $P_{u,v}(z) = 0$ and $P_{v,u}(z) = 0$.
\end{proof}

\section{Decomposing skew-symmetric $2$-cycles on a graph}

The proofs of the following two lemmas are similar to the proofs of their corresponding lemmas in the previous section.

\begin{lemma}\label{lem:tocycleultskew}
Let $e = uv$ be an edge of a $3$-connected graph $G$ such that $G/e$ is $3$-connected. Suppose $N(\{u,v\})$ has at least four vertices. Then one of the following holds:
\begin{enumerate}
\item for any skew-symmetric $2$-cycle $d$ on $G$, there exist a $d_1\in B^{\text{skew}}_{u,v}(G)$ and skew-symmetric quad $2$-cycles $q_i-T(q_i)$ on $G$, $i=1,\ldots,k$, such that $P_{u,v}(d-d_1-\sum_{i=1}^k (q_i-T(q_i))) = 0$; or
\item there exists an oriented cycle $C$ in $H:=G-\{u,v\}$ containing all vertices of $N(\{u,v\})$, and  $N_H(u)$ and $N_H(v)$ cross in $C$, and there are no two disjoint $C$-crossing paths in $H$ with ends in $N(\{u,v\})$.
\end{enumerate}
\end{lemma}


\begin{lemma}\label{lem:skewoncycle}
Let $G$ be a graph and let $e=uv$ be an edge of $G$ such that $G-\{u,v\}$ is a cycle $C$.
Suppose $h_1, h_2\in \delta(u)\setminus \{e\}$ and $k_1,k_2\in \delta(v)\setminus \{e\}$ are distinct edges such that the ends $w_1,z_1,w_2,z_2$ of $h_1,k_1,h_2,k_2$ on $C$, respectively, occur in the given order.
If $d$ is a skew-symmetric $2$-cycle, then there exist a $d'\in B^{\text{skew}}_{u,v}(G)$, and an integer $\alpha\in \mathbb{Z}$ such that $P_{u,v}(d+d')=\alpha \mat{h_1}{h_2}{k_1}{k_2}$ and   $P_{v,u}(d+d')=-\alpha \mat{h_1}{h_2}{k_1}{k_2}$.
\end{lemma}

\begin{lemma}\label{lem:skew4notripod}
Let $e=uv$ be an edge of a $3$-connected graph $G=(V,E)$ such that $G/e$ is $3$-connected. Suppose $H := G-\{u,v\}$ has a cycle $C$ containing all vertices of $N(\{u,v\})$, and $N_H(u)$ and $N_H(v)$ cross in $C$. If there are no two disjoint $C$-crossing paths in $H$ with ends in $N(\{u,v\})$, and there is no tripod in $H$ with feet in $N(\{u,v\})$, then, for any skew-symmetric $2$-cycle $d$ on $G$, there exists a $d_1\in B^{\text{skew}}_{u,v}(G)$ such that $P_{u,v}(d-d_1) = 0$.
\end{lemma}
\begin{proof}
Let $d$ be a skew-symmetric $2$-cycle on $G$.
Since $N_H(u)$ and $N_H(v)$ cross in $C$, there exist distinct edges $h_1, h_2\in \delta(u)\setminus \{e\}$ and distinct edges $k_1,k_2\in \delta(v)\setminus \{e\}$ such that the ends $w_1,z_1,w_2,z_2$ of $h_1,k_1,h_2,k_2$ on $C$, respectively, occur in the given order.

As $G/e$ is $3$-connected, there is no $(\leq 2)$-separation $(H_1, H_2)$ of $H$ with $N(\{u,v\})\subseteq V(H_1)$ and $V(H_1)\not=V(H)$. As there are no disjoint paths in $H$ with ends $p_1,p_2\in N(\{u,v\})$ and $q_1,q_2\in N(\{u,v\})$, respectively, such that $p_1,q_1,p_2,q_2$ occur in $C$ in order, and $H$ has no tripod with feet in $N(\{u,v\})$, $H$ can be embedded in the unit disc $D$ with the vertices of $N(\{u,v\})$ on the boundary in the order induced by $C$, by Theorem~\ref{thm:disjointpaths}. Extend the embedding of $H$ in $D$ to a drawing $\phi$ of $G$ in the plane by mapping the vertices $u$ and $v$, and the interiors of the edges incident with $u$ and $v$ in generic position in the complement of $D$. This can be done such that the edges $h_1$ and $k_1$ cross, while $h_i$ and $k_j$ with $i\not=1$ or $j\not=1$ do not cross.

By the previous lemma, there exist a $d'\in B^{\text{skew}}_{u,v}(G)$, and an integer $\alpha\in \mathbb{Z}$ such that $z = d+d'$ satisfies $P_{u,v}(z)=\alpha \mat{h_1}{h_2}{k_1}{k_2}$ and   $P_{v,u}(z)=-\alpha \mat{k_1}{k_2}{h_1}{h_2}$.
  
As the restriction of the drawing $\phi$ to $H$ is an embedding of $H$ in $D$, $\kr_{\phi}(z_{\restriction E(H)^2})=0$.  Let $H' := G[\delta(u)\cup\delta(v)]$. Since $\kr_{\phi}(d) = 0$ for any $2$-cycle $d$ on $G$, $\kr_{\phi}(z_{\restriction E(H')^2}) = 0$. 
Hence $\kr_{\phi}(P_{u,v}(z) +  P_{v,u}(z)) = 0$; that is, $\alpha \kr_{\phi}(h_1,k_1) - \alpha \kr_{\phi}(k_1,h_1) = 0$. Since  $\kr_{\phi}(h_1,k_1) = -\kr_{\phi}(k_1,h_1)$, we obtain that $\alpha=0$. Hence $P_{u,v}(z) = 0$.
\end{proof}

\begin{lemma}\label{lem:skew3notripod}
Let $e=uv$ be an edge of a $3$-connected graph $G=(V,E)$ such that $G/e$ is $3$-connected. Suppose $N(\{u,v\})$ has exactly three vertices. If there is no tripod in $H := G-\{u,v\}$ with feet in $N(\{u,v\})$, then, for any skew-symmetric $2$-cycle $d$ on $G$, $P_{u,v}(d) = 0$.
\end{lemma}
\begin{proof}
As $N(\{u,v\})$ has exactly three vertices, $P_{u,v}(d) = \alpha[f_1,f_2,f_3; g_1, g_2, g_3]$
and $P_{v,u}(d) = -\alpha[g_1,g_2,g_3; f_1, f_2, f_3]$ for some integer $\alpha$, where $f_1,f_2,f_3\in \delta(u)\setminus\{e\}$ and $g_1,g_2,g_3\in \delta(v)\setminus\{e\}$.  Let $u_1,u_2,u_3$ be the three vertices in $N(\{u,v\})$. 

As $G/e$ is $3$-connected, there is no $(\leq 2)$-separation $(H_1, H_2)$ of $H$ with $N(\{u,v\})\subseteq V(H_1)$ and $V(H_1)\not=V(H)$. By Theorem~\ref{thm:disjointpaths}, $H$ can be embedded in a disc $D$ with the vertices of $N(\{u,v\})$ on the boundary. Then $\kr_{\phi}(d_{\restriction E(H)^2}) = 0$. Let $H' := G[\delta(u)\cup\delta(v)]$. Extend the embedding of $H$ in $D$ to a drawing $\phi$ of $G$ in the plane by mapping the vertices $u$ and $v$, and the interiors of the edges incident with $u$ and $v$ in generic position in the complement of $D$. This can be done such that $H'$ has only two nonadjacent edges $h_1,h_2$ that intersect; we assume that $h_1\in \delta(u)$ and $h_2\in \delta(v)$.
Then $\kr_{\phi}(d_{\restriction E(H')^2}) = 0$, by Lemma~\ref{lem:crzero}. That is, 
$\kr_{\phi}(P_{u,v}(d) + P_{v,u}(d)) = \alpha \kr_{\phi}(h_1,h_2) - \alpha \kr_{\phi}(h_1,h_2) = 0$. Since  $\kr_{\phi}(h_1,h_2) = -\kr_{\phi}(h_2,h_1)$, we obtain that $\alpha=0$. Hence $P_{u,v}(d) = 0$.
\end{proof}

Let $\phi$ be a drawing of a graph $G$ in the plane that is in generic position.
Observe that if $d$ is a skew-symmetric $2$-cycle on $G$ and $H$ is a subgraph of $G$, then $\kr_{\phi}(d_{\restriction E(H)^2})$ is even always.

\begin{lemma}\label{lem:skew4hastripod}
Let $e=uv$ be an edge of a $3$-connected graph $G=(V,E)$ such that $G/e$ is $3$-connected. Suppose $C$ is a cycle of $H := G-\{u,v\}$ containing all vertices of $N(\{u,v\})$, and $N_H(u)$ and $N_H(v)$ cross in $C$.  If there are no two disjoint $C$-crossing paths in $H$ with ends in $N(\{u,v\})$, but there is a tripod in $H$ with feet in $N(\{u,v\})$, then, for any skew-symmetric $2$-cycle $d$ on $G$, there exist a $d_1\in B_{u,v}^{\text{skew}}$, a skew-symmetric quad $2$-cycle $q - T(q)$, and an integer $\alpha$ such that $P_{u,v}(d - d_1- \alpha (q - T(q))) = 0$.
\end{lemma}
\begin{proof}
Let $d$ be a skew-symmetric $2$-cycle on $G$. By Lemma~\ref{lem:2tripods},  there exist a tripod $TP_1$ in $H$ with feet, $p_1,p_2,p_3$, in $V(C)$ and a tripod $TP_2$ with feet $p_1,p_2,p_3$ in $G[\delta(u)\cup \delta(v)\cup E(C)]$ such that $V(TP_1\cap TP_2) = \{p_1,p_2,p_3\}$.
Then $TP_1\cup TP_2$ is a quad $Q$ in $G$. Choose midpoints $u_1,u_2$ in $TP_1$ and $TP_2$, respectively. Let $q$ be a quad $2$-cycle on $Q$ with left side $\{u_1,u_2\}$.

Since $N_H(u)$ and $N_H(v)$ cross in $C$, there exist distinct edges $h_1, h_2\in \delta(u)\setminus \{e\}$ and distinct edges $k_1,k_2\in \delta(v)\setminus \{e\}$ such that the ends $w_1,z_1,w_2,z_2$ of $h_1,k_1,h_2,k_2$ on $C$, respectively, occur in the given order.

Let $\phi$ be a drawing of $G$ in the plane that is in generic position such that $H$ is drawn in a disc $D$ with the vertices of $N(\{u,v\})$ on the boundary of $D$, and $H_1$ is drawn outside $D$; we may assume that the $K_{3,3}$-subdivision $K$ intersects in two nonadjacent edges $h_1$ and $h_2$ only. 

Let $$d' = d - \frac{\kr_{\phi}(d_{\restriction E(H_1)^2})}{\kr_{\phi}((q-T(q))_{\restriction E(H_1)^2})}(q-T(q)).$$ Then $\kr_{\phi}(d'_{\restriction E(H_1)^2}) = 0$.

By Lemma~\ref{lem:oncycle}, there exist a $d_1\in B^{\text{skew}}_{u,v}(H_1)$, and an integer $\alpha$ such that 
\begin{align*}
P_{u,v}(d'-d_1) &= \alpha \mat{h_1}{h_2}{k_1}{k_2},\\ 
P_{v,u}(d'-d_1) &= -\alpha \mat{k_1}{k_2}{h_1}{h_2}. 
\end{align*}
Let $z = d'-d_1$.

As $\kr_{\phi}(d'_{\restriction E(H_1)^2})=0$, $\kr_{\phi}(z_{\restriction E(H_1)^2}) = 0$. 
Hence $\kr_{\phi}( \alpha \mat{h_1}{h_2}{k_1}{k_2} -\alpha \mat{k_1}{k_2}{h_1}{h_2}) = 0$. Since $\kr_{\phi}(\mat{h_1}{h_2}{k_1}{k_2}) = \kr_{\phi}(h_1,k_1)$, $\kr_{\phi}(\mat{k_1}{k_2}{h_1}{h_2}) = \kr_{\phi}(k_1,h_1)$, and $\kr_{\phi}(h_1,k_1) = -\kr_{\phi}(k_1,h_1)$, we obtain that $\alpha= 0$. Hence $P_{u,v}(z) = 0$.
\end{proof}

\begin{lemma}\label{lem:skew3hastripod}
Let $G=(V,E)$ be a $3$-connected graph and let $e=uv$ be an edge of $G$ such that $G/e$ is $3$-connected. Suppose $N(\{u,v\})$ has exactly three vertices. If there is a tripod in $H$ with feet in $N(\{u,v\})$, then, for any skew-symmetric $2$-cycle $d$ on $G$, there exist a skew-symmetric quad $2$-cycle $q-T(q)$ and an integer $\alpha$ such that $P_{u,v}(d - \alpha (q-T(q)))$. 
\end{lemma}
\begin{proof}
Let $d$ be a skew-symmetric $2$-cycle on $G$. Let $\{u_1,u_2,u_3\} = N(\{u,v\})$. Let $TP_1$ be a tripod of $H$ with feet $u_1,u_2,u_3$. Let $H' = G[\delta(u)\cup \delta(v)]$ and let $TP_2 = H_1\setminus \{e\}$. Then $TP_1\cup TP_2$ is a quad $Q$. Let $q$ be a quad $2$-cycle on $Q$ with left side $\{u,a\}$, where $a$ is a midpoint of $TP_1$. Let $\phi$ be a drawing of $G$ in the plane that is in generic position such that $H$ is drawn in a disc $D$ with $u_1,u_2,u_3$ on the boundary of $D$, and $H_1$ is drawn outside $D$; we may assume that $H'$ intersects in two nonadjacent edges $h_1$ and $h_2$ only. Let 
$$z := d - \frac{\kr_{\phi}(d_{\restriction E(H_1)^2})}{\kr_{\phi}((q-T(q))_{\restriction E(H_1)^2})}(q-T(q)).$$
Then $\kr_{\phi}(z_{\restriction E(H_1)^2}) = 0$.
Since $P_{u,v}(z) = \alpha[f_1, f_2, f_3; g_1, g_2, g_3]$ and $P_{v,u}(z) = -\alpha[g_1, g_2, g_3; f_1, f_2, f_3]$. Then $\kr_{\phi}(P_{u,v}(z) + P_{v,u}(z)) = \alpha \kr_{\phi}(h_1,h_2) - \alpha \kr_{\phi}(h_2,h_1) = 0$. Since $\kr_{\phi}(h_1,h_2) = -\kr_{\phi}(h_2,h_1)$, we obtain that $\alpha=0$. Hence $P_{u,v}(z)$.
\end{proof}

\begin{theorem}
Any $d\in L^{\text{skew}}(G)$ is a sum of skew-symmetric cycle-pair $2$-cycles and skew-symmetric quad $2$-cycles.
\end{theorem}
\begin{proof}
We show this by induction on the number of vertices of $G$. By
Lemma~\ref{lem:1sep} and Lemma~\ref{lem:2sep}, we may assume that $G$
is $3$-connected. The case where $|V|=4$ is easy.

Let $d$ be a skew-symmetric $2$-cycle on $G$. By Lemma~\ref{lem:3connected}, there exists an edge $e=uv$ of $G$ such that $G/e$ is $3$-connected. 
We show that there exist skew-symmetric cycle-pair $2$-cycles $d_{C_i,D_i} - d_{D_i, C_i}$, $i=1,\ldots,k$ and skew-symmetric quad $2$-cycles $q_i - T(q_i)$, $i=1,\ldots,\ell$ such that
\begin{equation} \label{eq:skewsubtract}
d' = d - \sum_{i=1}^k (d_{C_i,D_i} - d_{D_i, C_i}) - \sum_{i=1}^\ell (q_i - T(q_i)) 
\end{equation}
satisfies $P_{u,v}(d') = 0$.
Then $d'/e$ is a
$2$-cycle on $G/e$. By induction on the number of vertices, $d'/e$ has the required form.
Lemma~\ref{lem:uncontract} then shows that $d'$ belongs to the
group spanned by all skew-symmetric quad $2$-cycles and all skew-symmetric quad $2$-cycles. Hence, with \eqref{eq:skewsubtract}, we have shown the theorem.

Suppose first $N(\{u,v\})$ has at least four vertices. If there are a $d_1\in B^{\text{skew}}_{u,v}(G)$ and skew-symmetric quad $2$-cycles $q_i-T(q_i)$, $i=1,\ldots,k$, such that $P_{u,v}(d-d_1-\sum_{i=1}^k (q_i-T(q_i))) = 0$, then we are done.  If not, then, by Lemma~\ref{lem:cyclenodisjoint}, there exists an oriented cycle $C$ in $H:=G-\{u,v\}$ containing all vertices of $N(\{u,v\})$, and  $N_H(u)$ and $N_H(v)$ cross in $C$, and there are no disjoint paths in $H$ with ends $r_1,r_2\in N(\{u,v\})$ and $s_1,s_2\in N(\{u,v\})$, respectively, such that $r_1,s_1,r_2,s_2$ occur in $C$ in order.
If $H$ has no tripod $TP_1$ with feet in $N(\{u,v\})$, then, by Lemma~\ref{lem:skew4notripod}, there exist a $d_1\in B^{\text{skew}}_{u,v}(G)$ such that $P_{u,v}(d-d_1) = 0$.
If $H$ has a tripod with feet in $N(\{u,v\})$, then, by Lemma~\ref{lem:skew4hastripod}, there exist a $d_1\in B_{u,v}(G)$, a quad $2$-cycle $q$, and an integer $\alpha$ such that $P_{u,v}(d - d_1- \alpha (q-T(q))) = 0$.

Suppose next that $N(\{u,v\})$ has exactly three vertices. If there is no tripod in $H$ with feet in $N(\{u,v\})$, then, by Lemma~\ref{lem:skew3notripod}, $P_{u,v}(d) = 0$. If there is a tripod in $H$ with feet in $N(\{u,v\})$, then, by Lemma~\ref{lem:skew3hastripod}, there exist a skew-symmetric quad $2$-cycle $q-T(q)$, and an integer $\alpha$ such that $P_{u,v}(d - \alpha (q-T(q))) = 0$.
\end{proof}

\section{$2$-cycles on Kuratowski-connected graphs}\label{sec:Kurconn}

Recall that a graph $G$ is Kuratowski connected if no $(\leq 3)$-separation $(G_1,G_2)$ divides Kuratowski subgraphs $H_1$ and $H_2$.


\begin{lemma}\label{lem:quadsim}
Let $K$ be a quad of $G$ with axles $\{a,b\}$ and $\{c,d\}$, and let $q_{a,c}$ be a quad $2$-cycle with left side $\{a,c\}$. If $K$ has width zero, then there exist quad $2$-cycles $q_{a,d}$ and $q_{a,b}$, with left side $\{a,d\}$ and $\{a,b\}$, respectively, such that
$q_{a,b}-q_{a,c}+q_{a,d} = d_H$ for some Kuratowski $2$-cycles $d_H$ on the Kuratowski subgraph $K-\{a\}$. If the width of $K$ is nonzero, then there exists a quad $2$-cycles $q_{a,d}$ with left side $\{a,d\}$, such that $q_{a,c}+q_{a,d} -  d_H\in B(K)$ for some Kuratowski $2$-cycles $d_H$ on the Kuratowski subgraph $H$ obtained from $K$ by deleting all branches of $K$ that contain $a$.
\end{lemma}
\begin{proof}
We show that $q_{a,b}-q_{a,c}+q_{a,d} = d_H$ for some Kuratowski $2$-cycles $d_H$ on the Kuratowski subgraph $K-\{a\}$, if $K$ has width zero, leaving the other statement to the reader.

Denote by $r_1,r_2,r_3$ the vertices of degree four in $K$. For $i=1,2,3$, let $a_i, b_i, c_i, d_i$ be the edges connecting $r_i$ and $a$, connecting $r_i$ and $b$, connecting $r_i$ and $c$, and connecting $r_i$ and $d$, respectively. Let $A=\{a_1,a_2,a_3\}$, let $B=\{b_1,b_2,b_3\}$, let $C=\{c_1,c_2,c_3\}$, and let $D=\{d_1,d_2,d_3\}$. Then we can write $q_{a,b}, q_{a,c}, q_{a,d}$ as
\begin{equation*}
q_{a,b} = \bordermatrix{~ & A & B & C & D \cr
A & 0 & 0 & U & -U\cr
B & 0 & 0 & -U & U\cr
C & 0 & 0 & 0 & 0\cr
D & 0 & 0 & 0 & 0\cr},
q_{a,c} = \bordermatrix{~ & A & B & C & D \cr
A & 0 & U & 0 & -U\cr
B & 0 & 0 & 0 & 0\cr
C & 0 & -U & 0 & U\cr
D & 0 & 0 & 0 & 0\cr},
q_{a,d} = \bordermatrix{~ & A & B & C & D \cr
A & 0 & U & -U & 0\cr
B & 0 & 0 & 0 & 0\cr
C & 0 & 0 & 0 & 0\cr
D & 0 & -U & U & 0\cr}.
\end{equation*}
Then
\begin{equation*}
q_{a,b}-q_{a,c}+q_{a,d} = \bordermatrix{~ & A & B & C & D \cr
A & 0 & 0 & 0 & 0\cr
B & 0 & 0 & -U & U\cr
C & 0 & U & 0 & -U\cr
D & 0 & -U & U & 0\cr}.
\end{equation*}
Since $U^T = -U$, we see that $q_{a,b}-q_{a,c}+q_{a,d}$ is a symmetric $2$-cycle on 
the subgraph $H$ of $K$ obtained by deleting the branches containing vertex $a$. As $H$  is a subdivision of a Kuratowski subgraph, $q_{a,b}-q_{a,c}+q_{a,d}=d_H$ for a Kuratowski $2$-cycle $d_H$ on $H$.
\end{proof}

\begin{lemma}\label{lem:quadkuratowskiconnected}
Let $q$ be a quad $2$-cycle on a graph $G$. If $G$ is Kuratowski connected, then either $q\in B(G)$ or $q - d_H\in B(G)$ for some Kuratowski $2$-cycle $d_H$. 
\end{lemma}
\begin{proof}
Suppose to the contrary that there is a quad $2$-cycle $q$ on a quad $K_q$ of $G$ such that neither $q\in B(G)$ nor $q - d_H\in B(G)$ for some Kuratowski $2$-cycle $d_H$. Choose such a quad $2$-cycle $q$ for which the width of $K_q$ is minimum and let 
\begin{equation*}
K_q = P_1\cup P_2\cup P_3\cup Q_1\cup Q_2\cup Q_3\cup R_1\cup R_2\cup R_3,
\end{equation*} 
where $P_1, P_2, P_3, Q_1, Q_2, Q_3, R_1, R_2, R_3$ are as in the definition of quad. Let $\{a,b\}$ and $\{c,d\}$ be the axles and $\{a,c\}$ be the left side of the quad. For $i=1,2,3$, let $p_i$ be the vertex in $V(P_i\cap Q_i)$ and let $r_i$ be the vertex in $V(R_i\cap Q_i)$. 
For $i=2,3$, let $C_i = P_{L,1}\cup Q_1\cup R_{L,1}\cup R_{L,i}\cup Q_i\cup P_{L,i}$ and let $D_i = P_{R,1}\cup Q_1\cup R_{R,1}\cup R_{R,i}\cup Q_i\cup P_{R,i}$. Orient the edges in the paths $P_{L,1}$ and $P_{R,1}$ towards $p_1$.
Orient $C_2, C_3$ such that $P_{L,1}$ is traversed in forward direction, and orient $D_2, D_3$ such that $P_{R,1}$ is traversed in forward direction.
Then $$q = d_{C_2, D_3} - d_{C_3, D_2}.$$

We first show that  there is no path in $G$ with an end in $V(P_1\cup P_2\cup P_3)\setminus V(Q_1\cup Q_2\cup Q_3)$ and an end in $V(Q_1\cup Q_2\cup Q_3)\setminus V(P_1\cup P_2\cup P_3)$ that is internally disjoint from $K_q$.

Suppose first that there is a path $P$ in $G$ with an end in $V(P_{L,i}) \setminus V(Q_i)$ and an end in $V(Q_i)\setminus V(P_i)$ for some $i\in \{1,2,3\}$ that is internally disjoint from $K_q$; we may assume that $i=1$. Let $C$ be the cycle in $P_{L,1}\cup Q_1\cup P$ and let $D$ be the cycle in $P_{R,2}\cup P_{R,3}\cup Q_2\cup Q_3\cup R_{R,2}\cup R_{R,3}$. Let $e$ be the edge of $P_{L,1}$ incident with $p_1$, and let $f$ be the edge of $P_{R,2}$ incident with $p_2$. Orient $C$ and $D$ such that $d_{C,D}(e,f) = 1$.
Then $q' = q- q(e,f)d_{C,D}$ is a quad $2$-cycle on a quad whose width is smaller than the width of $K_q$.

Suppose next that there is a path $P$ in $G$ with an end in $V(P_{L,i}) \setminus V(Q_i)$ and an end in $V(Q_j)\setminus V(P_j)$ for some $i,j \in \{1,2,3\}$ with $i\not=j$ that is internally disjoint from $K_q$; we may assume that $i=1$ and $j=2$. Let $C$ be the cycle in $P_{L,1}\cup P_{L,2}\cup Q_2\cup P$ and let $D$ be the cycle in $P_{R,1}\cup P_{R,3}\cup Q_1\cup Q_3\cup R_{R,1}\cup R_{R,3}$. Let $e$ be the edge of $Q_2$ incident with $p_2$, and let $f$ be the edge of $P_{R,1}$ incident with $p_1$. Orient $C$ and $D$ such that $d_{C,D}(e,f) = 1$. Then $q'=q-q(e,f)d_{C,D}$ is a quad $2$-cycle on a quad whose width is smaller than the width of $K_q$. 

Therefore there is no path with an end in $V(P_1\cup P_2\cup P_3)\setminus V(Q_1\cup Q_2\cup Q_3)$ and an end in $V(Q_1\cup Q_2\cup Q_3)\setminus V(P_1\cup P_2\cup P_3)$ that is internally disjoint from $K_q$.

Suppose next that there is a path $P$ with an end in $V(P_{L,i}) \setminus V(Q_i)$ and an end in $V(R_{L,j}) \setminus V(Q_j)$ for some $i,j\in \{1,2,3\}$ that is internally disjoint from $K_q$. We assume that $i=1$ and $j=2$; the other cases are similar. Let $C'_2$ be the cycle in $P_{L,1}\cup Q_1\cup R_{L,1}\cup R_{L,2}\cup P$ and let $C''_2$ be the cycle in $P_{L,1}\cup P \cup R_{L, 2}\cup Q_2\cup P_{L, 2}$. Orient $C'_2, C''_2$ such that the edges of $P_{L,1}$ are traversed in forward direction. Then $\chi_{C'_2} + \chi_{C''_2} = \chi_{C_2}$. In the same way, we define $C'_3$ and $C''_3$. Then $\chi_{C'_3} + \chi_{C''_3} = \chi_{C_3}.$ Then
$$q = d_{C_2, D_3} - d_{C_3, D_2} = d_{C'_2, D_3} + d_{C''_2, D_3} - d_{C'_3, D_2} -d_{C''_3, D_2} \in B(G).$$ Therefore, there is no path with an end in $V(P_{L,i}) \setminus V(Q_i)$ and an end in $V(R_{L,j}) \setminus V(Q_j)$ for some $i,j\in \{1,2,3\}$ that is internally disjoint from $K_q$. Similarly, there is no path with an end in $V(P_{R,i}) \setminus V(Q_i)$ and an end in $V(R_{R,j}) \setminus V(Q_j)$ for some $i,j\in \{1,2,3\}$ that is internally disjoint from $K_q$.  

Since $G$ is Kuratowski connected, a path $P$ must exist with an end in $V(P_{L,i}) \setminus V(Q_i)$ and an end in $V(R_{R,j}) \setminus V(Q_j)$ or with an end in $V(P_{R,i}) \setminus V(Q_i)$ and an end in $V(R_{L,j}) \setminus V(Q_j)$.
If $P$ has an end in $V(P_{L,i}) \setminus V(Q_i)$ and an end in $V(R_{R,j}) \setminus V(Q_j)$, and the width of $K_q$ is nonzero, then Lemma~\ref{lem:quadsim} 
brings us back to the case where there is a path with an end $u$ in $V(P_{L,i}) \setminus V(Q_i)$ and an end $v$ in $V(R_{L,j}) \setminus V(Q_j)$ for some $i,j\in \{1,2,3\}.$ The case where $P$ has an end in $V(P_{R,i}) \setminus V(Q_i)$ and an end in $V(R_{L,j}) \setminus V(Q_j)$ is similar. We may therefore assume that the width of $K_q$ is zero. 
Since $G$ is Kuratowski connected a path $Q$ must exist with an end in $V(P_{L,i}) \setminus V(Q_i)$ and an end in $V(P_{R,i}) \setminus V(Q_i)$, or an end in $V(R_{L,i}) \setminus V(Q_i)$ and an end in $V(R_{R,i}) \setminus V(Q_i)$. Also in this case, Lemma~\ref{lem:quadsim} brings us back to the case where there is a path with an end in $V(P_{L,i}) \setminus V(Q_i)$ and an end in $V(R_{L,j}) \setminus V(Q_j)$ for some $i,j\in \{1,2,3\}.$
\end{proof}

\begin{theorem}\label{thm:mainKur}
Let $G$ be a Kuratowski-connected graph. Then $L(G)$ is generated by cycle-pair $2$-cycles and Kuratowski $2$-cycles, if any.
\end{theorem}

For skew-symmetric $2$-cycles we have the following.
\begin{lemma}\label{lem:quadsimskew}
Let $K$ be a quad of $G$ with axles $\{a,b\}$ and $\{c,d\}$, and let $q_{a,c}$ be a quad $2$-cycle with left side $\{a,c\}$. If $K$ has width zero, then there exist skew-symmetrc quad $2$-cycles $q_{a,d}$ and $q_{a,b}$, with left side $\{a,d\}$ and $\{a,b\}$, respectively, such that $(q_{a,c}-T(q_{a,c}))+(q_{a,d}-T(q_{a,d}))+(q_{a,b}-T(q_{a,b})) = 0$. If the width of $K$ is nonzero, then there exists a quad $2$-cycles $q_{a,d}$ such that $(q_{a,c}-T(q_{a,c}))+(q_{a,d}-T(q_{a,d})) \in B^{\text{skew}}(K)$.
\end{lemma}

A proof similar to the proof of Lemma~\ref{lem:quadkuratowskiconnected} shows the following.
\begin{lemma}\label{lem:skewquadkuratowskiconnected}
Let $q$ be a skew-symmetric quad $2$-cycle on a graph $G$. If $G$ is Kuratowski connected, then $q\in B^{\text{skew}}(G)$. 
\end{lemma}

The following lemma can be shown by inspection.
\begin{lemma}\label{lem:Petersen}
For each graph $G$ in the Petersen family, $d_H\in B(G)$ for some Kuratowski $2$-cycle $d_H$ on $G$.
\end{lemma}

\begin{lemma}\label{lem:decontract2cycle}
Let $G'$ be a minor of $G$. If there is a Kuratowski $2$-cycle $d_{H'}$ such that
$d_{H'} \in B_{G'}$, then there is a Kuratowski $2$-cycle $d_H$ such that
$d_H \in B_G$.
\end{lemma}
\begin{proof}
We may assume that $G'$ arises from $G$ by contracting one edge $e$.
We can write $d_{H'} = \sum_{i} \alpha_i d_{C'_i,D'_i}$. By
Lemma~\ref{lem:uncontract}, there is a Kuratowski subgraph $H$ of
$G$, a Kuratowski $2$-cycle $d_{H}$ on $H$, a pair of disjoint
oriented cycles $C,D$ of $G$, and an $\alpha\in \{0,1\}$, such
that $d' = d_{H} + \alpha d_{C,D}$ satisfies $d'/e = d_{H'}$. By the
same lemma there are pairs of disjoint oriented cycles $C_i,D_i$
such that $d_{C_i,D_i}/e = d_{C'_i,D'_i}$. Let $d = d_{H} +
\alpha d_{C,D} - \sum_{i} \alpha_i d_{C_i,D_i}$. Then $d(f_1,f_2) =
0$ for each pair of edges $f_1,f_2$ in $G\setminus e$. Hence, $d(f_1,f_2) = 0$ for each pair of edges $f_1,f_2$ in $G$.
\end{proof}

Let $\Gamma$ be a frame in $\oR^3$ isomorphic to $G$ and consider a diagram of $\Gamma$ into some plane; see \cite{Robertson2} for the definition of a frame in $\oR^3$. The sign of a crossing is defined as in
Figure~\ref{fig:signcrossing}.
\begin{figure}
\begin{center}
\includegraphics[width=0.3\textwidth]{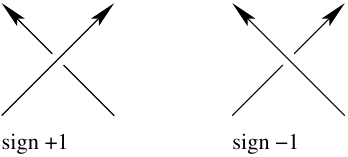}
\caption{The sign of a crossing}\label{fig:signcrossing}
\end{center}
\end{figure}
For $e,f \in E$, define $\sign_{\Gamma}(e,f)$ as the sum of
the signs of all crossings of $e$ with $f$.
For a $2$-cycle $d$, define the \emph{linking number} of
$d$ by
\begin{equation*}
\link_{\Gamma} (d) = \frac{1}{2}\sum_{e, f \in E} \sign_{\Gamma}(e, f) d(e, f).
\end{equation*}
The linking number is independent of the chosen diagram of the
embedding. If $C$ and $D$ are disjoint oriented circuits of $G$,
then $\link_{\Gamma}(d_{C,D})$ equals the linking number of $C$ and $D$ when viewed as cycles in $\Gamma$. If $d_1$ and $d_2$ are $2$-cycles and $d = d_1+d_2$, then $\link_{\Gamma}(d) = \link_{\Gamma}(d_1) + \link_{\Gamma}(d_2)$. Observe that, for any Kuratowski $2$-cycle $d_H$, $\link_{\Gamma}(d_H)$ is odd; see \cite{Holst2003b}.

A \emph{linkless embedding} of a graph $G$ is an embedding of $G$ such that each pair of disjoint cycles $C, D$ has linking number $\link(C, D) = 0$. 
The Petersen family is the family of all graphs that can be obtained from $K_6$, the complete graph on six vertices, by applying $\Delta Y$-transformations and $Y\Delta$-transformations. One graph in this family of graphs is the Petersen graph. Robertson, Seymour, and Thomas \cite{MR1339849} proved that a graph has a linkless embedding in $3$-space if and only if $G$ has no minor isomorphic to a graph in the Petersen family.

\begin{lemma}
Let $G$ be a Kuratowski-connected graph containing a Kuratowski subgraph. Then the following are equivalent:
\begin{enumerate}
\item\label{item:1} $G$ has a minor isomorphic to a graph in the Petersen family;
\item\label{item:3} $d_H\in B(G)$ for any Kuratowski $2$-cycles $d_H$ of $G$.
\item\label{item:2} $d_H\in B(G)$ for some Kuratowski $2$-cycles $d_H$ of $G$;

\end{enumerate}
\end{lemma}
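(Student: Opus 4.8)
The plan is to establish the cycle $(2)\Rightarrow(3)\Rightarrow(1)\Rightarrow(2)$, reading statement $(1)$ through the theorem of Robertson, Seymour and Thomas that a graph is linklessly embeddable if and only if it has no minor in the Petersen family; thus $(1)$ asserts that $G$ is \emph{not} linklessly embeddable. The implication $(2)\Rightarrow(3)$ is immediate, since $G$ contains a Kuratowski subgraph and hence carries at least one Kuratowski $2$-cycle. For $(3)\Rightarrow(2)$ I would invoke the preceding theorem of van der Holst, that for Kuratowski-connected $G$ any two Kuratowski $2$-cycles satisfy $d_H-d_{H'}\in B(G)$ or $d_H+d_{H'}\in B(G)$: if one Kuratowski $2$-cycle lies in $B(G)$ then so does every other. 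Thus $(2)$ and $(3)$ are equivalent, and it remains to prove $(3)\Leftrightarrow(1)$.

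The bridge to embeddings is a linking functional. For a generic embedding $\phi\colon G\to\mathbb{R}^3$, the map $\phi\times\phi$ carries the deleted product into $\mathbb{R}^3\times\mathbb{R}^3-\Delta\simeq S^2$, and composing with the degree homomorphism on $H_2$ gives $\lambda_\phi\colon L(G)\to\mathbb{Z}$ with $\lambda_\phi(\chi_{C,D})=\operatorname{lk}_\phi(C,D)$ for disjoint oriented cycles $C,D$. Two facts drive the argument. First, because the swap of the two factors induces the antipodal map on $S^2$, one checks $\lambda_\phi\circ T=\lambda_\phi$, matching the symmetry of the linking number. Second, passing one edge $e$ through another edge $f$ during a homotopy of $\phi$ changes $\lambda_\phi(d)$ by $\pm(d(e,f)+d(f,e))$; when $d$ is symmetric this is even. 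Since a Kuratowski $2$-cycle is symmetric, $\lambda_\phi(d_H)\bmod 2$ is therefore independent of $\phi$, and a single computation on a fixed embedding of $K_5$ (respectively $K_{3,3}$) identifies this residue with the mod-$2$ van Kampen obstruction, yielding $\lambda_\phi(d_H)\equiv 1\pmod 2$ for every embedding $\phi$ of $G$.

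The implication $(3)\Rightarrow(1)$ then follows quickly. Suppose $d_H=\sum_i\alpha_i\chi_{C_i,D_i}\in B(G)$ and, for contradiction, that $G$ is linklessly embeddable. Choosing a flat embedding $\phi_0$, every pair of disjoint cycles has linking number zero, so $\lambda_{\phi_0}$ vanishes on $B(G)$; but then $\lambda_{\phi_0}(d_H)=\sum_i\alpha_i\operatorname{lk}_{\phi_0}(C_i,D_i)=0$, contradicting $\lambda_{\phi_0}(d_H)\equiv 1\pmod 2$. Hence $G$ is not linklessly embeddable, and by Robertson--Seymour--Thomas it has a Petersen-family minor, which is $(1)$.

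The remaining implication $(1)\Rightarrow(3)$ is the main obstacle, being the constructive direction. I would argue contrapositively, showing that $d_H\notin B(G)$ forces $G$ to be linklessly embeddable. By Theorem~\ref{thm:mainKur} we have $L(G)=B(G)+\mathbb{Z}d_H$, so $d_H\notin B(G)$ means $L(G)/B(G)\cong\mathbb{Z}$, generated by the class of a single Kuratowski $2$-cycle. The hard part is to convert this algebraic nondegeneracy into an actual flat embedding: for this I would adapt the linkless-embedding construction of van der Holst~\cite{Holst2003b}, which for the symmetric module produces a flat embedding exactly when one Kuratowski $2$-cycle is genuinely needed, transferring it to $L(G)$ via the fact that $d_H$ is symmetric (so that membership of $d_H$ in $B(G)$ is governed by its image in $B^{\text{sym}}(G)$). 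Verifying that the flatness obstruction is detected precisely by $[d_H]\in L(G)/B(G)$, and that a flat embedding can be assembled once this class is trivial, is where the real work lies; once such an embedding is in hand, Robertson--Seymour--Thomas excludes a Petersen-family minor, contradicting $(1)$ and completing the proof.
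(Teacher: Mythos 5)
Your treatment of the two membership conditions, and of the direction ``some Kuratowski $2$-cycle lies in $B(G)$ implies a Petersen-family minor,'' matches the paper. The equivalence of the existential and universal versions rests, as you say, on van der Holst's theorem that any two Kuratowski $2$-cycles of a Kuratowski-connected graph satisfy $d_H-d_{H'}\in B(G)$ or $d_H+d_{H'}\in B(G)$; and the paper's argument for that direction is exactly your parity argument: in a linkless embedding every circuit-pair $2$-cycle has linking number zero, while $\link(d_H)$ is odd, so $d_H$ cannot lie in $B(G)$.

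The gap is in the remaining direction, from the Petersen-family minor to membership of the Kuratowski $2$-cycles in $B(G)$. You set this up contrapositively, which commits you to proving that $d_H\notin B(G)$ forces $G$ to admit a linkless (indeed flat) embedding; you yourself flag this as ``where the real work lies'' and only gesture at adapting van der Holst's embedding construction. That step is a substantial theorem in its own right and is not supplied, so as written the cycle of implications does not close. The paper avoids this entirely by arguing the direction forward and locally: if $P$ is a Petersen-family minor of $G$, then $P$ itself contains a Kuratowski subgraph $H$ with $d_H\in B(P)$ (a finite verification on the Petersen-family graphs, in the spirit of Conway--Gordon); this membership lifts from the minor to $G$ via the decontraction lemmas (a circuit-pair $2$-cycle of $G/e$ lifts to a circuit-pair $2$-cycle of $G$, and a Kuratowski $2$-cycle of $G/e$ lifts to a Kuratowski $2$-cycle of $G$ up to a symmetric circuit-pair $2$-cycle), and Kuratowski-connectedness then propagates membership in $B(G)$ to every Kuratowski $2$-cycle of $G$. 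Replacing your unfinished embedding construction with this direct verification-and-lifting argument is what is needed to complete the proof; everything else in your proposal is sound.
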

\begin{proof}
(\ref{item:1})$\implies$ (\ref{item:3}) Suppose $G$ has a minor $P$ isomorphic to a graph in the Petersen family. Let $H$ be a Kuratowski subgraph in $P$. Then $d_H\in B(P)$. Since $G$ is Kuratowski connected, $d_{H'}\in B(G)$ for any Kuratowski $2$-cycle $d_{H'}$ of $G$. 

(\ref{item:3})$\implies$ (\ref{item:2}) Clear.

(\ref{item:2})$\implies$ (\ref{item:1}) Suppose $d_H\in B(G)$ for some Kuratowski $2$-cycles $d_H$ of $G$. Suppose for a contradiction that $G$ has a linkless embedding $\Gamma$. Then, as $\link_{\Gamma}(d_H)$ is odd, there exists a cycle-pair $2$-cycle $d_{C,D}$ with $\link_{\Gamma}(d_{C,D})$ odd. This contradicts that $\Gamma$ is a linkless embedding. Hence $G$ has a minor isomorphic to a graph in the Petersen family.
\end{proof}

\begin{theorem}
Let $G$ be a Kuratowski-connected graph. Then $L(G)=B(G)$ if and only if $G$ is planar or $G$ does not admit a linkless embedding.
\end{theorem}
\begin{proof}
Suppose $L(G)=B(G)$ and $G$ is not planar. Then $G$ has a Kuratowski subgraph $H$ of $G$. As $L(G)=B(G)$, $d_H \in B(G)$.  If $G$ admits a linkless embedding $\Gamma$, then, as $\link_{\Gamma}(d_{C,D}) = 0$ for every pair of disjoint cycles $C, D$, $\link_{\Gamma}(d_H) = 0$. However, this contradicts that $\link_{\Gamma}(d_H)$ is odd. 

If $G$ is planar, then, by the previous theorem, $L(G) = B(G)$. If $G$ does not admit a linkless embedding, then $G$ has a minor $K$ isomorphic to a graph in the Petersen family. By Lemma~\ref{lem:Petersen}, $d_{H}\in B(K)$ for some Kuratowski $2$-cycle $d_H$ on $K$. By Lemma~\ref{lem:decontract2cycle}, $d_{H'}\in B(G)$ for some Kuratowski $2$-cycle $d_{H'}$ on $G$. As $G$ is Kuratowski connected, $d_{H_1}\in B(G)$ for any Kuratowski $2$-cycle $d_{H_1}$ on $G$.
\end{proof}

If $G$ is  a Kuratowki-connected graph that admits a linkless embedding $\Gamma$, then 
\begin{equation}\label{eq:linkless}
B(G) = \{d\in L(G)~:~\link_{\Gamma}(d) = 0\}.
\end{equation}

\begin{theorem}[\cite{Holst2003b}]\label{thm:linkless}
Let $G$ be a graph admitting a linkless embedding. Then there exists a polynomial-time algorithm to find a linkless embedding $\Gamma$ of the graph $G$.
\end{theorem}

\begin{theorem}\label{thm:genset}
Let $G$ be a Kuratowski-connected graph. Then there exists a polynomial-time algorithm to find a set of generators for the space generated by all cycle-pair $2$-cycles.
\end{theorem}
\begin{proof}
We first check whether $G$ admit a linkless embedding or not. If not, then $B(G) = L(G)$, and we can take a generating set for $L(G)$. If $G$ admits a linkless embedding, then we embed $G$ linklessly. Using $(\ref{eq:linkless})$, we can find a generating set for $B(G)$.
\end{proof}

Let $A$ be an abelian group and let $a\in A$. If $n$ is a positive integer, we define $na$ to be the result of adding $a$ $n$ times. Using this, we define $na = (-n)(-a)$ for negative integers $n$. For $n=0$, $0 a = 0$.

For a graph $G=(V, E)$, let $F : E^2\to A$. If $x, y\in \mathbb{Z}^E$, we define $F(x, y) = \sum_{e,f\in E} x(e)y(f)F(e,f)$.

\begin{theorem}
Let $G=(V,E)$ be a Kuratowski-connected graph and let $A$ be an abelian group.
Let $F : E^2\to A$. Then there exists a polynomial-time algorithm for testing whether there exist two disjoint oriented cycles $C,D$ in $G$ such that $F(C,D) \not= 0$. 
\end{theorem}
\begin{proof}
By Theorem~\ref{thm:genset}, we can find a generating set $\mathcal{G}$ of $B(G)$. For all pairs $x,y\in \mathcal{G}$, we check whether $F(x,y) \not=0$. If there exist generators $x,y$ such that $F(x,y) \not=0$, then there exists two disjoint oriented cycles $C, D$ in $G$ such that $F(C, D)\not=0$. If not, there are no two disjoint oriented cycles $C, D$ in $G$ such that $F(C, D)\not=0$.
\end{proof}

In case $A=\mathbb{Z}$ and $F \equiv 1$, the algorithm finds two disjoint cycles $C, D$. In case $A=\mathbb{Z}_m$ $(m>1)$ and $F\equiv 1$, the algorithm find two disjoint cycles $C, D$ with sizes not divisible by $m$. In case $A=\mathbb{Z}$, $F(s_1t_1, s_2t_2)=1$ and $F\equiv 0$ otherwise, the algorithm find two disjoint cycles $C, D$, with $C$ using the edges $s_1t_1$ and $D$ using the edge $s_2t_2$. This corresponds to two disjoint paths in $G-\{s_1t_1,s_2t_2\}$, one joining $s_1$ to $t_1$ and another joining $s_2$ to $t_2$. In case $A=\mathbb{Z}$, $G$ is a spatially embedded graph, $F(e,f) = 1$ if $e$ goes over $f$ with $f$ going from left to right, and $F(e,f)=-1$ if $e$ goes over $f$ with $f$ going from right to left, then the algorithm find two disjoint cycles $C,D$ such that the linking number of $C, D$ is nonzero.


\bibliographystyle{plain}
\bibliography{./biblio}
\end{document}